\theoremstyle{plain}
\newtheorem{lem}{Lemma}
\newtheorem{thm}[lem]{Theorem}
\theoremstyle{definition}
\newtheorem{amp}{Example}
\theoremstyle{remark}
\newtheorem{rem}[lem]{Remark}
\def\P{\mathbb P}
\def\Y{\mathbb Y}
\def\Q{\mathbb Q}
\title{Algorithms for Del Pezzo Surfaces of Degree 5\\
	(Construction, Parametrization)}
\author{Jon Gonz\'alez--S\'anchez\thanks{Departamento de Matem\'aticas, Universidad Aut\'onoma de Madrid, 28049 Madrid, Spain.} 
and Michael Harrison\thanks{RICAM, Austrian Academy of Sciences, 4040 Linz, Austria} \\ 
and Irene Polo--Blanco\thanks{Departamento de Matem\'aticas, Estad\'istica y Computaci\'on, Universidad de Cantabria, 39005 Santander, Spain} 
and Josef Schicho$^\dagger$
}
\begin{document}

\maketitle

\begin{abstract}
It is well known that every Del Pezzo surface of degree $5$ defined over a field $k$ is parametrizable over $k$. In this paper we give an algorithm for parametrizing, as well as algorithms for constructing examples in every isomorphism class and for deciding equivalence.
\end{abstract}

\section*{Introduction}

It is well-known that every Del Pezzo surface of degree~5 over a field $k$
(not necessarily algebraically closed) has a proper parametrization
with coefficients in $k$ (see \cite{Swinnerton:72,Sheperd-Barron:92}).
In this paper, we give a simple algorithm for constructing such a parametrization.
The construction is a slight modication of Sheperd-Barrons construction (the
modification is important for getting a good performance). In addition,
we give a simple algorithm for constructing example of Del Pezzo
surfaces of degree~5. The algorithm takes as input a quintic squarefree
univariate polynomial over $k$. The construction is complete in the sense
that any Del Pezzo surface of degree~5 is obtained up to a
projective coordinate change with matrix entries in the ground field.

The classification of quintic Del Pezzo surfaces up to projective isomorphisms
defined over $k$ is also well-known (see \cite{Skorobogatov:01}, Lemma 3.1.7). 
Any such surface has 10 lines defined over the algebraic closure $\bar{k}$,
and the incidence graph of the configuration of these lines has symmetry group $S_5$.
There is a Galois action on the set of lines with image in $S_5$.
Two quintic Del Pezzo surfaces are isomorphic if the two Galois actions on
the line configurations are $S_5$-conjugate. The Galois action
may also be described as the Galois action on the roots of the quintic
input polynomial in our constuction algorithm. We use this explicit
decription to give an algorithm for deciding isomorphy of two given
quintic Del Pezzo surfaces.

The parametrization algorithm was motivated by the more general problem of computing
rational parametrizations over $k$ for arbitrary rational surfaces (if possible).
By Enriques-Manin reduction (see \cite{Iskovskih:80} for the theory and \cite{Schicho:97} 
for an algorithm), one can birationally reduce either to a conical fibration or to 
a Del Pezzo surface. For conic fibration and for Del Pezzo surfaces of degree 6, 8, or 9, 
parametrization algorithms are available (\cite{Schicho:00d,Schicho:06d,Schicho:06f,Schicho:08b})
and implemented in Magma~\cite{MAGMA}. For all other degrees except 5
the surfaces are either not properly parametrizable with coefficients
in the ground field, or they can be reduced to degree 6, 8, or 9
(see also \cite{Polo_Top:08,Swinnerton:70,Iskovskih:80}).

The first author acknowledges support by the Spanish Ministerio de Ciencia e Innovaci\'on, grant MTM2008-06680-C02-01.
The first and third author were partially supported by the Marie-Curie Initial Training Network (FP7-PEOPLE-2007-1-1-ITN) SAGA (ShApes, Geometry and Algebra).
The fourth author was partially supported by the Austrian Science Fund (FWF), project 21461-N23.

\section{Theory}

Throughout, we assume that $k$ is a perfect field and $\bar{k}$ is an algebraic
closure of $k$. We are primarily interested in the case $k=\Q$, but
all constructions represented work also in the general case. Projective
algebraic varities are defined as subsets of projective space
over $\bar{k}$, but we assume that all varieties are defined by
equations with coefficients in $k$, and consequently all constructions
will be possible within $k$.

Abstractly, a Del Pezzo surface is defined as a complete nonsingular surface
such that the anticanonical divisor $-K$ is ample. The integer $d:=K^2$ is called
the degree of the Del Pezzo surface. If $d\ge 3$, then $-K$ is very ample
and defines a natural embedding in $\P^d$ as a surface of degree~$d$.
Conversely, it is known that every non-singular surface of degree $d$ in $\P^d$
is either Del Pezzo or ruled or the projection of the Veronese surface of degree~4
in $\P^5$ to $\P^4$.

The general theory of Del Pezzo surfaces which is relevant to this paper
may be summarized by the following well-known theorems.

\begin{thm} \label{thm:general} 
If $F\subseteq\P^d$ is a Del Pezzo surface of degree $d$, then $3\le d\le 9$.

If $d\ne 8$, then $F$ is 
$\bar{k}$-isomorphic to the
blowing up of $\P^2$ at $9-d$ points in general position, i.e. no 3
points lie on a line and no 6 points on a conic. 

If $d=8$, then $S$ is
$\bar{k}$-isomorphic to either the blowup of $\P^2$ at a point or to 
$\P^1\times\P^1$.
\end{thm}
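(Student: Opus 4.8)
The plan is to reduce everything to the case where $k=\bar k$ is algebraically closed, which is legitimate since the assertion concerns $\bar k$-isomorphism, and then to invoke the classification of smooth rational surfaces through the minimal model program. The first step is to establish that $F$ is rational. Because $-K$ is ample, Kodaira vanishing applied to the ample class $-K_F$ gives $H^1(F,\mathcal O_F)=0$, so the irregularity $q$ vanishes; moreover $2K_F$ is anti-ample and therefore has no sections, so the second plurigenus $P_2$ vanishes as well. By Castelnuovo's rationality criterion ($q=P_2=0$), $F$ is a rational surface, and in particular $\chi(\mathcal O_F)=1$.

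Next I would run the minimal model program. Every smooth rational surface is obtained from a minimal rational surface by a finite sequence of point blow-ups, and the minimal rational surfaces are $\P^2$ and the Hirzebruch surfaces $\mathbb F_n$ with $n\neq 1$. The key lemma is that contracting a $(-1)$-curve $\pi\colon F\to F'$ preserves the Del Pezzo property: from $K_F=\pi^*K_{F'}+E$ together with the projection formula one checks that $-K_{F'}$ remains ample. Iterating, $F$ descends to a minimal Del Pezzo surface, so it remains only to identify the minimal ones. For $\mathbb F_n$ with $n\ge 2$ the negative section $C_0$ satisfies $-K\cdot C_0=2-n\le 0$, so $-K$ is not ample; hence the only minimal Del Pezzo surfaces are $\P^2$ (degree $9$) and $\mathbb F_0=\P^1\times\P^1$ (degree $8$). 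Since blowing up a point of $\P^1\times\P^1$ produces a surface isomorphic to $\P^2$ blown up at two points, every Del Pezzo surface other than $\P^1\times\P^1$ itself arises as a blow-up of $\P^2$ at $r=9-d$ points. This yields both the bound $d\le 9$ (as $r\ge 0$) and the dichotomy stated in degree $8$, namely the one-point blow-up $\mathbb F_1$ versus $\P^1\times\P^1$.

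For the remaining bound I would compute, via Riemann--Roch and the vanishing of higher cohomology, $h^0(F,-K)=1+K^2=d+1$, so the anticanonical map targets $\P^d$; very ampleness of $-K$ — and hence the embedding $F\subseteq\P^d$ postulated in the hypothesis — holds precisely when $d\ge 3$, forcing $3\le d$. Finally I must show that the general position condition is equivalent to ampleness of $-K$ on the blow-up $F\to\P^2$ at the $r=9-d\le 6$ points. For necessity, a configuration violating general position produces a non-positive anticanonical intersection: the strict transform of a line through three of the points is a $(-2)$-curve $L$ with $-K\cdot L=0$, and the strict transform of a conic through six of the points is likewise a $(-2)$-curve meeting $-K$ in $0$, contradicting ampleness. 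For sufficiency I would apply the Nakai--Moishezon criterion, verifying $(-K)^2=d>0$ together with $-K\cdot C>0$ for every irreducible curve $C$.

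I expect the main obstacle to be this sufficiency direction: controlling the effective cone of the blow-up well enough to guarantee $-K\cdot C>0$ for all irreducible $C$, which amounts to ruling out the appearance of any $(-2)$-curve, and it is exactly here that the hypotheses ``no three collinear'' and ``no six on a conic'' are consumed (for $r\le 6$ these are the only binding conditions). A secondary subtlety is the clean separation of the two degree-$8$ models, since one must confirm that $\P^1\times\P^1$ and the one-point blow-up of $\P^2$ are genuinely non-isomorphic, for instance by distinguishing their Picard lattices.
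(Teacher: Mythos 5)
The paper does not actually prove this theorem: its ``proof'' is a citation to Chapter IV, Theorems 24.3 and 24.4 of Manin's \emph{Cubic Forms}. What you have written is essentially a reconstruction of that standard classification argument (rationality, reduction to a minimal model, exclusion of $(-2)$-curves), so your route is sound in outline and the real comparison is with the literature rather than with the paper. One piece of good news: the step you flag as the main obstacle --- the Nakai--Moishezon \emph{sufficiency} direction, i.e.\ that blowing up points in general position yields an ample anticanonical class --- is not needed for this statement at all. The theorem only asserts the implication ``Del Pezzo $\Rightarrow$ blow-up of $\P^2$ at $9-d$ points in general position''; the converse is what the construction in Section 2 requires (and the paper proves its degree-$5$ case there, in Theorem 5, by contracting the conic through the five points). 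Your necessity argument via $(-2)$-curves is all this theorem needs. Likewise, the lower bound $d\ge 3$ does not require the unproven classification of when $-K$ is very ample: the hypothesis $F\subseteq\P^d$ already excludes $d\le 2$, since $\P^1$ contains no surface and the only surface in $\P^2$ is $\P^2$ itself, of degree $1$.

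Two gaps do remain. First, the minimal model program exhibits $F$ as an \emph{iterated} blow-up of $\P^2$, in which later centers may be infinitely near earlier ones, whereas the theorem claims distinct points of $\P^2$. You rule this out by the same mechanism as collinearity: if a center lay on an exceptional curve, the strict transform of that exceptional curve in $F$ would have self-intersection at most $-2$ and non-negative intersection with $K$, contradicting ampleness of $-K$. This is the third ``general position'' condition hidden in the statement and should be made explicit. Second, your rationality step rests on Kodaira vanishing, a characteristic-zero tool; the paper works over an arbitrary perfect field and states that its results hold in the general case. Over $\Q$ your argument stands, but in positive characteristic Kodaira vanishing genuinely fails, so you need a substitute argument for $H^1(F,\mathcal O_F)=0$ (Castelnuovo's criterion itself survives in characteristic $p$, by Zariski, once that vanishing is known). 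The remaining ingredients --- contraction of a $(-1)$-curve preserves the Del Pezzo property, $-K\cdot C_0=2-n$ on $\mathbb{F}_n$, rerouting $\mathrm{Bl}_1(\P^1\times\P^1)$ through $\P^2$, and the Picard-lattice distinction of the two degree-$8$ surfaces --- are correct and characteristic-free.
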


\begin{proof}
See \cite[Chap. IV Theorem 24.3 and Theorem 24.4]{Manin:74}. 
\end{proof}

\begin{thm} \label{thm:gpar}
Let $F\subseteq\P^d$ be a Del Pezzo surface of degree $d$, $d\ne 8$.
Then there exists a birational parametrization $\phi:\P^2\to F$,
$p\mapsto(P_0(p):\dots:P_d(p))$, such that $(P_0,\dots,P_d)$ 
are a basis for the vectorspace of all cubics vanishing at
$9-d$ points in general position. This parametrization
has coefficients in $\bar{k}$.
\end{thm}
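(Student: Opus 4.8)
The plan is to identify the abstract description of $F$ coming from Theorem~\ref{thm:general} with the explicit linear system of plane cubics, and then read off the parametrization from the blow-up morphism. First I would invoke Theorem~\ref{thm:general}: since $d\ne 8$, there is an isomorphism $\psi:X\to F$ defined over $\bar{k}$, where $\pi:X\to\P^2$ is the blow-up of $\P^2$ at $9-d$ points $p_1,\dots,p_{9-d}$ in general position. Writing $H=\pi^*L$ for the pullback of a line and $E_1,\dots,E_{9-d}$ for the exceptional divisors, the blow-up formula gives $K_X=-3H+\sum_i E_i$, so that $-K_X=3H-\sum_i E_i$. Global sections of this line bundle correspond exactly to the plane cubics vanishing at $p_1,\dots,p_{9-d}$.

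Next I would match the given embedding with this system. The excerpt records that for $d\ge 3$ the embedding $F\hookrightarrow\P^d$ is the one defined by the very ample bundle $-K_F$, so the hyperplane bundle $\mathcal{O}_F(1)$ equals $-K_F$. Since $\psi$ is an isomorphism it carries canonical bundle to canonical bundle, whence $\psi^*\mathcal{O}_F(1)=-K_X=3H-\sum_i E_i$. Consequently the $d+1$ homogeneous coordinate functions on $\P^d$ pull back along $\psi$ to a basis of $H^0(X,-K_X)$, i.e. to $d+1$ linearly independent cubics $P_0,\dots,P_d$ vanishing at the $9-d$ points. A direct dimension count confirms the bookkeeping: cubics in $\P^2$ form a $10$-dimensional space, and $9-d$ points in general position impose independent conditions, leaving a space of dimension $d+1$, exactly the number of coordinates on $\P^d$.

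Finally I would assemble the parametrization as the composite $\phi=\iota\circ\psi\circ\pi^{-1}$, where $\iota:F\hookrightarrow\P^d$ is the embedding and $\pi^{-1}:\P^2\dashrightarrow X$ is the rational inverse of the blow-up, defined away from the points $p_i$. In coordinates this is precisely $p\mapsto(P_0(p):\dots:P_d(p))$. The map is birational onto $F$ because $\pi$ is birational and $\iota\circ\psi:X\to F$ is an isomorphism; very ampleness of $-K_X$ guarantees the cubics have no base points on $X$, so the map is a morphism there. Since $\psi$ and the points $p_i$ are only defined over $\bar{k}$, the cubics $P_i$ have coefficients in $\bar{k}$, which gives the last assertion.

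The step I expect to be the main obstacle is the identification $\psi^*\mathcal{O}_F(1)=-K_X$: one must check carefully that the embedding supplied by Theorem~\ref{thm:general} really is the anticanonical one and that the isomorphism $\psi$ transports $-K_F$ to $-K_X$ compatibly, so that the coordinate functions genuinely become a cubic basis rather than merely an abstractly isomorphic linear system.
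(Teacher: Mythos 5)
Your proof is correct, but it cannot be compared step-by-step with the paper's, because the paper gives no argument at all for Theorem~\ref{thm:gpar}: it simply cites Manin (Chap.~IV, proof of Theorem~24.5) and Schicho (Corollary~2). What you have written is exactly the standard argument that those references contain --- identify $F$ with the blow-up $X$ of $\P^2$ at $9-d$ general points via Theorem~\ref{thm:general}, compute $-K_X=3H-\sum_i E_i$ so that $H^0(X,-K_X)$ is the space of cubics through the points, and transport the hyperplane bundle through the isomorphism --- so you have in effect supplied the proof that the paper delegates to the literature. Two small remarks. First, the obstacle you flag, namely $\mathcal{O}_F(1)=-K_F$, is in the paper's conventions essentially definitional: just before Theorem~\ref{thm:general} the paper declares that for $d\ge 3$ the embedding $F\subseteq\P^d$ is the ``natural embedding'' defined by the very ample divisor $-K$, so no further verification is needed there. (If one insisted on allowing an arbitrary nondegenerate degree-$d$ embedding, one would add a short Hodge-index/adjunction argument showing $H+K\equiv 0$, hence $H=-K$ since numerical and linear equivalence coincide on a rational surface; but that is outside the intended reading of the statement.) Second, your claim that the $d+1$ coordinate functions pull back to a \emph{basis} of $H^0(X,-K_X)$ tacitly uses that $F$ is nondegenerate in $\P^d$, i.e.\ that the restrictions of the coordinates to $F$ are linearly independent; this is automatic for the anticanonical embedding because $h^0(-K_F)=d+1$ (Riemann--Roch together with vanishing of $h^1$ and $h^2$ for the ample $-K$), which matches your dimension count $10-(9-d)=d+1$ on the cubic side.
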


\begin{proof}
See \cite[Chap. IV, proof of Theorem 24.5]{Manin:74} or \cite[Corollary 2]{Schicho:05}.
\end{proof}

Let $F\subseteq\P^5$ be a Del Pezzo surface of degree~5.
By Theorem~\ref{thm:gpar}, the surface can be parametrized
by cubics vanishing at four base points $q_1,\dots,q_4$. 
The surface $F$ contains 4 exceptional lines $E_1,\dots,E_4$, 
which are the preimages of $q_1,\dots,q_4$
under the inverse of the parametrization $\phi:\P^2\to F$.
For any two distinct base points $q_i,q_j$, the image of the
line connecting $q_i$ and $q_j$ under $\phi$ is also a line,
which we denote by $L_{ij}$. These lines are all lines on $F$.
Drawing a vertex for every line and an edge between vertices
such that the corresponding lines meet, we obtain the 
Petersen graph \includegraphics[width=1.5cm,height=1.5cm]{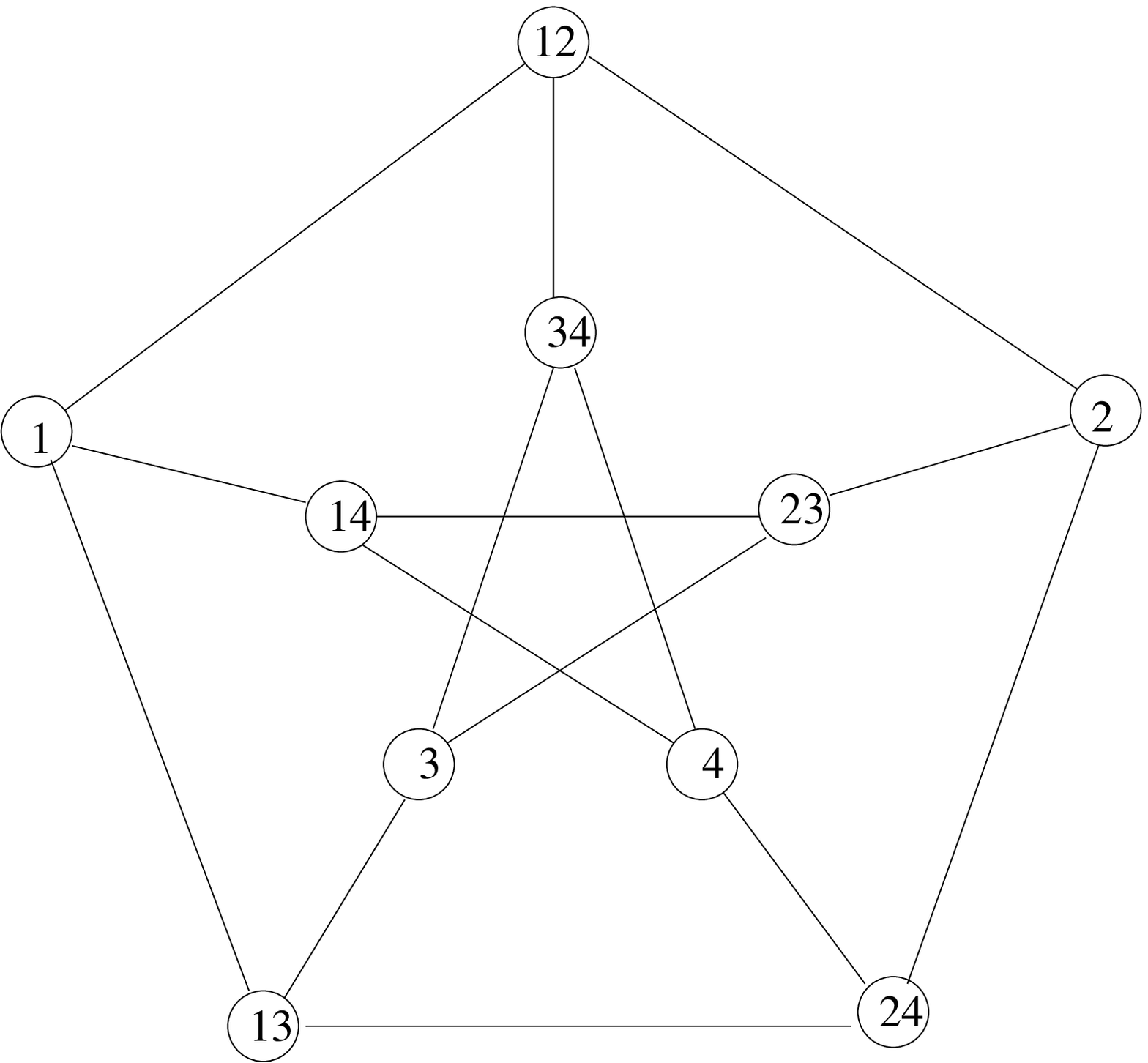}.

\begin{thm} \label{thm:res}
The ideal $I$ of a Del Pezzo surface $F\subseteq\P^5$ of degree~5
is generated by 5 quadrics $P_1,\dots,P_5\in R:=k[x_0,\dots,x_5]$.
The syzygy module
\[ \mathrm{Syz}_{(P_1,\dots,P_5)}= \{ (A_1,\dots,A_5)\in R^5\mid 
	A_1P_1+\dots+A_5P_5=0 \} \]
is generated by 5 vectors $V_1,\dots,V_5\in R^5$ of linear forms.
The second syzygy module
\[ \mathrm{Syz}_{(V_1,\dots,V_5)}= \{ (B_1,\dots,B_5)\in R^5\mid 
	B_1V_1+\dots+B_5V_5=0 \} \]
is generated by a single vector $W=(W_1,\dots,W_5)\in R^5$ of quadrics.
The entries $W_i$, $i=1,\dots,5$ generate the ideal $I$. 
With a suitable choice of basis $(V_1,\dots,V_5)$ of the linear part of 
$\mathrm{Syz}_{(P_1,\dots,P_5)}$, one can achieve that $W_i=B_i$ for
$i=1,\dots,5$i. then the matrix $M:=(V_{ij})_{i,j}$ of linear forms 
is skew symmetric, and the ideal is generated by the 5 first
Pfaffian minors of $M$.
\end{thm}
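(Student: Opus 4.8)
The plan is to recognize the entire statement as an instance of the Buchsbaum--Eisenbud structure theorem for codimension-three Gorenstein ideals, so that the work reduces to verifying that $R/I$ satisfies its hypotheses and then reading off the degrees. First I would establish that $R/I$ is arithmetically Gorenstein of codimension $3$; then I would pin down the graded Betti numbers from the Hilbert function; finally I would invoke the structure theorem, choosing bases so that the self-duality isomorphism of the resolution becomes the identity, which is what produces the skew-symmetric matrix $M$.

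For the Gorenstein reduction, recall that $F\subseteq\P^5$ is the anticanonical embedding of a degree-$5$ Del Pezzo surface, so $\mathcal{O}_F(1)\cong\omega_F^{-1}$. For Del Pezzo surfaces of degree at least $3$ the embedding is projectively normal and the intermediate cohomology $H^1(\P^5,\mathcal{I}_F(n))$ and $H^1(F,\mathcal{O}_F(n))$ vanishes for all $n$; together these make $R/I$ a Cohen--Macaulay ring of Krull dimension $3$, i.e.\ of codimension $3$ in $R$. Moreover $\omega_F\cong\mathcal{O}_F(-1)$ translates into $\omega_{R/I}\cong(R/I)(-1)$, so the canonical module of $R/I$ is a shift of $R/I$ itself, which is precisely the graded Gorenstein condition.

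Next I would compute the Betti numbers. By Riemann--Roch on $F$ one finds $\dim(R/I)_2=h^0(\mathcal{O}_F(-2K))=16$, whence $\dim I_2=\binom{7}{2}-16=5$ and $I$ contains no linear form; this accounts for the five quadric generators $P_1,\dots,P_5$. Cohen--Macaulayness forces the minimal free resolution to have length $3$, and Gorenstein self-duality (with dualizing twist $-5$) pins down its shape as
\[ 0\to R(-5)\xrightarrow{\,W\,}R(-3)^5\xrightarrow{\,M\,}R(-2)^5\xrightarrow{\,(P_1,\dots,P_5)\,}R\to R/I\to 0 . \]
Consequently the first syzygies $V_1,\dots,V_5$ must have linear entries and the single generator $W$ of the second syzygy module must have quadratic entries, exactly as asserted.

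At this point the Buchsbaum--Eisenbud theorem finishes the argument: a minimal free resolution of a codimension-three graded Gorenstein quotient is self-dual, its middle differential can be taken alternating, and the first differential is then the vector of ordered maximal Pfaffians of that matrix. Concretely, the self-duality gives an isomorphism $F_1\cong F_2^{\vee}(-5)$, and choosing the basis $V_1,\dots,V_5$ of the linear syzygies to be dual to $P_1,\dots,P_5$ under it makes the comparison map the identity; this is what forces $M=(V_{ij})$ to be skew-symmetric, identifies the entries of $W$ with the generators so that $W_i=B_i$, and exhibits each $P_i$ as a maximal Pfaffian of $M$. I expect the genuine obstacle to be the Gorenstein step rather than the structure theorem, which is a black box once its hypotheses hold: the effort lies in the projective-normality and cohomology-vanishing claims and in converting $\omega_F\cong\mathcal{O}_F(-1)$ into $\omega_{R/I}\cong(R/I)(-1)$. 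A secondary point, crucial for the paper's algorithmic purpose, is that everything descends to the ground field: since $I$ is defined over $k$, its minimal free resolution and the skew-symmetrization of $M$ are defined over $k$ as well. As an explicit alternative one may instead realize $F$ over $\bar k$ as a linear section $G(2,5)\cap\P^5$ of the Grassmannian in its Pl\"ucker embedding, whose ideal is generated by the $4\times4$ Pfaffians of the generic skew-symmetric matrix of Pl\"ucker coordinates; restricting those coordinates to linear forms on $\P^5$ then yields $M$ and its Pfaffians directly.
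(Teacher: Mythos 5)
Your proof is correct, but note that the paper itself gives no argument for this theorem: its proof is a one-line citation of Theorem 2.2 of \cite{Brodmann_Schenzel:06}, which classifies varieties of almost minimal degree (degree $=$ codimension $+2$); the quintic Del Pezzo surface in $\P^5$ is exactly the codimension-3 arithmetically Gorenstein instance of that classification. What you have written out is essentially the argument underlying that citation: verify that $R/I$ is arithmetically Cohen--Macaulay with $\omega_{R/I}\cong (R/I)(-1)$ (projective normality plus vanishing of $H^1(\mathcal{I}_F(n))$ and $H^1(\mathcal{O}_F(n))$ for all $n$), compute $\dim I_2=\binom{7}{2}-16=5$ via Riemann--Roch, use minimality together with the self-duality $F_i\cong F_{3-i}^{\vee}(-5)$ to exclude generators or syzygies in any other degrees (a step you compress into ``pins down its shape''; it does work, since an extra generator of degree $d\ge 3$ would force a free summand of $F_2$ generated in degree $5-d\le 2$, which by minimality would map to zero and could not lie in the image of $F_3=R(-5)$), and then invoke the Buchsbaum--Eisenbud structure theorem for the alternating middle matrix and the Pfaffian generators. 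The trade-off: the paper's citation is shorter and situates the surface in a general classification, while your route makes every numerical claim of the statement (five quadrics, linear first syzygies, a single quadratic second syzygy, the twist $-5$) transparent, and correctly keeps everything over $k$, since the minimal graded free resolution of an ideal defined over $k$ is itself defined over $k$ and the structure theorem holds over any field. Two small caveats: in characteristic 2 the correct conclusion is ``alternating'' (skew-symmetric with zero diagonal) rather than merely ``skew-symmetric''; and your closing alternative via a linear section of $G(2,5)$ in its Pl\"ucker embedding is only immediate over $\bar{k}$, so it can serve as a sanity check but not as a replacement for the argument over $k$, as you yourself flag.
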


\begin{proof}
See Theorem 2.2 in \cite{Brodmann_Schenzel:06}.
\end{proof}


\begin{amp} \label{amp:std}
Consider $p_1=(1:0:0)$, $p_2=(0:1:0)$, $p_3=(0:0:1)$ and $p_4=(1:1:1)$. 
Let $V$ be the space of cubics in $\Q [t_0,t_1,t_2]$
vanishing at $p_1$, $p_2$, $p_3$ and $p_4$. The following is a basis for $V$:
\begin{eqnarray*}
&P_1=t_1^2t_2-t_1t_2t_0, \ \
P_2=t_1^2t_0-t_1t_2t_0, \ \ 
P_3=t_0^2t_1-t_1t_2t_0, \\
&P_4=t_0^2t_2-t_1t_2t_0, \ \ 
P_5=t_2^2t_0-t_1t_2t_0, \ \
P_6=t_2^2t_1-t_1t_2t_0.
\end{eqnarray*}
The map $p\to (P_0(p):P_1(p):P_2(p):P_3(p):P_4(p):P_5(p))$ defines a parametrization of a 
Del Pezzo suface $S$ of degree $5$ which is isomorphic to 
the blow up of $\P^2$ at the points $p_1$, $p_2$, $p_3$ and $p_4$. With the matrix
\[ M =  \begin{pmatrix} 0 & -x_0+x_1 & -x_1 & x_1-x_5 & x_5 \\
	x_0-x_1 & 0 & -x_2 & -x_5 & x_5 \\
	x_1 & x_2 & 0 & x_2 & -x_3 \\
	-x_1+x_5 & x_5 & -x_2 & 0 & x_4 \\
	-x_5 & -x_5 & x_3 & -x_4 & 0 \end{pmatrix}  , \]
the surface $S$ is generated by the 5 Pfaffians of the $4\times 4$ diagonally symmetric
submatrices, and the syzigies are generated by the columns (or rows) of $M$.

\end{amp}


\section{Construction} \label{sec:cons}

We are going to describe a simple algorithm that takes as input a quintic normed 
and squarefree univariate polynomial $Q\in k[x]$, called the {\em seed},
and produces a quintic Del Pezzo surface. Roughly speaking, the seed is
used to construct 5 points in $\P^2$ in general position (defined over $\bar{k}$),
and the surface is the image of $\P^2$ under the map defined by quintics
vanishing doubly at the 5 points.

\paragraph{First step.} 
Using the seed, we construct a zero-dimensional
subvariety $B\subset\P^2$ defined over $k$, which is
the set of 5 points $\{q_1,\dots,q_5\}$ defined over $\bar{k}$ 
in general position. This means, no three points are collinear.
Geometrically, $B$ is the image of the zeroes of $Q$ under the
map $\psi:\bar{k}\to\P^2$, $x\mapsto (x^2:x:1)$. 
Algebraically, we get the ideal of $B$
as the quotient of the ideal $I_1$ generated by the homogenization
$Q_h(t,u)$ and $t^2-su$ by the ideal $I_2$ generated by $u$.

\begin{lem}
Any set $B\subset\P^2$ of 5 points defined over $k$ in general position
can be obtained by the above construction, up to a projective
coordinate change of $\P^2$ with matrix entries in $k$.
\end{lem}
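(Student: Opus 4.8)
The plan is to reduce everything to the geometry of the unique conic through the five points. First I would observe that, since no three of $q_1,\dots,q_5$ are collinear, the points impose independent conditions on the $6$-dimensional space of conics: a dependence would force the points into a pencil of conics, whose base locus is either four points (by B\'ezout) or a fixed line, and in either case three of the five points would have to be collinear. Hence they lie on a \emph{unique} conic $C\subseteq\P^2$, and the same collinearity obstruction shows $C$ is smooth (a singular conic is a union of at most two lines, on which five points cannot avoid three being collinear). Because $B$ is defined over $k$, the Galois group permutes $q_1,\dots,q_5$ and therefore fixes $C$; the one-dimensional space of quadrics vanishing on $B$ is Galois-stable, so $C$ is defined over $k$.

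The heart of the argument is to produce a $k$-rational point on $C$. For this I would use that $B$ is a $k$-rational divisor of degree $5$ on $C$, while the hyperplane section gives a $k$-rational divisor class of degree $2$. Since $\gcd(5,2)=1$, an integral combination such as $[B]-2[H]$ is a $k$-rational divisor class of degree $1$. As $C$ has genus $0$, Riemann--Roch yields $\ell([B]-2[H])=2>0$, so this class has an effective representative, i.e. a point $p_0\in C(k)$. This is the main obstacle, and it is precisely where the oddness of $5$ is essential: for an even number of points the combination argument collapses and $C$ may genuinely have no rational point (an anisotropic conic over $k$).

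Once $C(k)\neq\emptyset$, the curve $C$ is $k$-isomorphic to $\P^1$. I would realize this isomorphism as a parametrization $\P^1\to C$ given by a basis of binary quadratic forms over $k$; equivalently, since both $C$ and the standard conic $\Gamma:\{t_1^2=t_0t_2\}$ are embedded by the complete linear system of the degree-$2$ class $\mathcal O(1)=2[p_0]$, a change of basis in $H^0(\P^1,\mathcal O(2))=\langle s^2,su,u^2\rangle$ is a matrix in $\mathrm{GL}_3(k)$ carrying $C$ onto $\Gamma$, the image of $\psi:x\mapsto(x^2:x:1)$. Post-composing with a M\"obius transformation of $\P^1$ defined over $k$, I can moreover arrange that none of the five points is sent to the point $(1:0:0)$ that $\psi$ omits (possible whenever $\P^1(k)$ has more than five elements, in particular for all infinite $k$).

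Finally, after this coordinate change over $k$ the five points take the form $(x_i^2:x_i:1)$, and the set $\{x_1,\dots,x_5\}\subseteq\bar k$ is Galois-stable, because $B$ is defined over $k$ and the identification $\Gamma\leftrightarrow\P^1$ via $\psi$ is defined over $k$. Hence $\{x_1,\dots,x_5\}$ is the full set of roots of a monic squarefree quintic $Q\in k[x]$. This $Q$ is then a seed whose construction returns exactly the transformed set $B$, which is what the lemma asserts.
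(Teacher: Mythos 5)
Your proof is correct, and its skeleton matches the paper's: unique conic through $B$, descent of the conic to $k$, production of a $k$-rational point, normalization to the standard conic $s^2=tu$, moving $B$ off $(1:0:0)$, and reading off the quintic from the Galois-stable set $\psi^{-1}(B)$. The one genuinely different step is how the $k$-rational point on the conic is obtained. The paper does it constructively: choose a cubic $C_2$ over $k$ through $B$ and take the sixth intersection point of $C_1\cap C_2$; since the full intersection divisor (degree $6$) and $B$ (degree $5$) are both defined over $k$, the residual point is $k$-rational. You instead invoke Riemann--Roch: $[B]-2[H]$ is a $k$-rational divisor class of degree $5-4=1$ on a genus-$0$ curve, hence effective, hence a $k$-point. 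These are two faces of the same parity argument --- the paper's residual point is precisely the unique effective representative of the degree-one class $3[H]-[B]$ --- but the paper's version is directly algorithmic (compute one intersection), which matters in a paper about algorithms, whereas yours is the cleaner abstract statement and makes transparent exactly where the oddness of $5$ enters. Two further small points in your favor: you verify smoothness of the conic (needed for the equivalence with $s^2=tu$, and left implicit in the paper), and you flag the hypothesis that $\P^1(k)$ must have more than five elements when moving $B$ away from $(1:0:0)$ --- a caveat the paper passes over silently (the lemma does in fact fail over $\mathbb{F}_2$ and $\mathbb{F}_4$), though it is harmless for the intended case $k=\Q$.
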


\begin{proof}
For any such set $B$, there is a unique conic $C_1$ that contains $B$.
Since $B$ is defined over $k$, $C_1$ is also defined over $k$.
The conic $C_1$ contains also a $k$-rational point: it can be constructed
by choosing generically a cubic $C_2$ through $B$ and computing the 6-th
intersection point of $C_1$ and $C_2$. Any two conics in $\P^2$ defined
over $k$ with $k$-rational points are projectively equivalent by
a projective map with coefficients in $k$, so we may transform
the conic $C_1$ to the conic $C$ with equation $s^2-tu$. We may
assume that the point at infinity $(1:0:0)$ is not in $B$;
this can be achieved by applying an element of the transitive group 
of projective automorphisms of $\bar{k}$.
Then $\psi^{-1}(B)$ is a subset of five points in $\bar{k}$ 
which is invariant under the Galois group $\mathrm{Gal}(\bar{k}/k)$, 
and so there is a quintic normed squarefree polynomial $Q\in k[x]$ 
that has these points as zeroes.
\end{proof}

\paragraph{Second step.}
We construct a rational map $\phi:\P^2\dashrightarrow\P^5$;
we will show that this map is birational, and that the image $F$ is
a Del Pezzo surface of degree~5. The map is defined by
the vector space of quintics vanishing with order
at least~2 at the points in $B$. In terms of computations,
we compute the square of the ideal of $B$, saturate it with
respect to the irrelevant ideal, and take a basis of the homogeneous
part of degree~5.

\begin{thm}
Let $q_1, ... ,q_5$ be five points on $\P^2$ in general position and
let $\phi$ be the rational map from $\P^2$ into a projective space defined 
by the space of quintics passing through $q_1,\dots,q_5$
with order at least $2$. Then $\phi$ maps $\P^2$ into
a del Pezzo surface $F$ of degree $5$ in $\P^5$. 

The 10 lines on $F$ are the strict transforms of the lines
connecting two of the points $q_i$ and $q_j$, for $1\leq ij,\leq 5$.
\end{thm}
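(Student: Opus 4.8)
The plan is to resolve the rational map $\phi$ by passing to the blowup $\pi\colon X\to\P^2$ at the five points $q_1,\dots,q_5$, where the linear system of quintics becomes a divisor class amenable to intersection theory. Writing $H$ for the pullback of a line and $E_1,\dots,E_5$ for the exceptional divisors, the quintics vanishing to order at least $2$ at each $q_i$ correspond to the complete linear system $|D|$ with
\[ D = 5H - 2(E_1+\cdots+E_5). \]
Using $K_X=-3H+\sum_i E_i$ and $\chi(\mathcal{O}_X)=1$, Riemann--Roch on $X$ gives $\chi(D)=1+\tfrac12\,D\cdot(D-K_X)=6$; once the higher cohomology is seen to vanish (which will follow from the del Pezzo structure established below), this yields $\dim|D|=5$, so $\phi$ maps into $\P^5$ as asserted.

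The geometric heart of the argument is the unique conic $C$ through the five general points. Its strict transform on $X$ has class $2H-\sum_i E_i$, with $C^2=-1$ and arithmetic genus $0$, so it is a $(-1)$-curve. The decisive identity is
\[ D = 5H-2\textstyle\sum_i E_i = \bigl(3H-\sum_i E_i\bigr)+\bigl(2H-\sum_i E_i\bigr) = -K_X + C. \]
Contracting $C$ by $c\colon X\to Y$ produces a smooth surface with $c^*K_Y = K_X - C$, hence $c^*(-K_Y) = -K_X + C = D$. Since blowing down a $(-1)$-curve raises the canonical square by one, $(-K_Y)^2 = (-K_X)^2 + 1 = 4+1 = 5$, so $Y$ is a del Pezzo surface of degree $5$ and $-K_Y$ is very ample. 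Thus $|D|$ is the pullback of the anticanonical embedding of $Y$, the morphism resolving $\phi$ factors as $X \xrightarrow{c} Y \hookrightarrow \P^5$, and the image $F=\phi(\P^2)=Y$ is a del Pezzo surface of degree $5$ in $\P^5$.

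For the lines I will classify the $(-1)$-curves of the degree-$4$ surface $X$ and follow them through $c$. These sixteen curves are $E_1,\dots,E_5$, the ten strict transforms $L_{ij}=H-E_i-E_j$ of the lines $\overline{q_iq_j}$, and the conic $C$. A direct computation gives $L_{ij}\cdot C=0$ and $E_i\cdot C=1$, so under the blowdown the ten $L_{ij}$ stay disjoint from the contracted curve and descend to $(-1)$-curves on $Y$, while each $E_i$ acquires self-intersection $0$. Correspondingly $D\cdot L_{ij}=1$ but $D\cdot E_i=2$, confirming that the $L_{ij}$ map to lines on $F$ and the $E_i$ to conics. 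Since a del Pezzo surface of degree $5$ carries exactly ten lines and I have produced ten, these are all of them, namely the strict transforms of the lines joining pairs $q_i,q_j$.

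The step I expect to require the most care is verifying that the resolved morphism is genuinely the composite $c$ followed by the anticanonical embedding, rather than merely coinciding with it on a dense open set; equivalently, that $|D|$ is base-point free on $X$. This holds because $D=c^*(-K_Y)$ is the pullback of the base-point-free class $-K_Y$, and because $D\cdot C=0$ forces $D|_C$ to be trivial on $C\cong\P^1$, so no base point can hide on the contracted conic. With base-point-freeness in hand the factorization is forced, and $F$ is identified as claimed.
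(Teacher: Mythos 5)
Your overall route is genuinely different from the paper's (the paper quotes Nagata for the dimension and base-point-freeness of the system, gets birationality and the degree from the self-intersection count, and invokes Manin's contraction criterion for smoothness), and the idea of writing $D=-K_X+C$ and identifying $\phi$ with the anticanonical map of the blowdown is attractive. But there is a genuine gap at the decisive step. From $c^*K_Y=K_X-C$ you correctly get $(-K_Y)^2=5$, and you then conclude ``so $Y$ is a del Pezzo surface of degree $5$ and $-K_Y$ is very ample.'' That inference is invalid: a smooth rational surface with $K^2=5$ need not be del Pezzo. For instance, the blowup of $\P^2$ at four \emph{collinear} points has $K^2=9-4=5$, yet the strict transform $\ell$ of the line through them satisfies $-K\cdot\ell=3-4=-1$, so $-K$ is not even nef. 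Ampleness of $-K_Y$ is precisely the content of the theorem (it is what makes the image nonsingular and anticanonically embedded), and it is precisely where the hypothesis that no three of the $q_i$ are collinear must enter. As written, you use general position only to get the irreducible conic $C$; the same formal computation would go through verbatim for five points with three on a line, where the conclusion is false. Your later phrase ``the sixteen $(-1)$-curves of the degree-$4$ surface $X$'' silently presupposes the missing fact that $X$ is a quartic del Pezzo, which is never proved.

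The gap is fixable along your own lines: first invoke the standard fact that the blowup $X$ of $\P^2$ at five points, no three collinear, is a del Pezzo surface of degree $4$, so $-K_X$ is ample; then transfer ampleness through the contraction by Nakai--Moishezon: $(-K_Y)^2=5>0$, and for any irreducible curve $\Gamma\subset Y$ with strict transform $\Gamma'$ and multiplicity $m$ at the blown-down point, $-K_Y\cdot\Gamma=(-K_X+C)\cdot(\Gamma'+mC)=-K_X\cdot\Gamma'+C\cdot\Gamma'>0$, since $-K_X\cdot C=1$ and $C^2=-1$ cancel. With that inserted, the rest holds together: the forward reference for $h^1(X,D)=0$ is not circular, because your contraction argument never uses $\dim|D|=5$ (though over a general perfect field, possibly of positive characteristic, you should justify that vanishing for del Pezzo surfaces by a characteristic-free argument rather than Kodaira vanishing --- the paper sidesteps this by citing Nagata), and your identification of the ten lines is correct granting the classical line count on a quintic del Pezzo, which the paper instead re-derives directly from the condition $D\cdot H=1$.
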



\begin{proof}
It is well known that a linear system of quintics with 5 double points
has dimension~5 and has no unassigned base points (see \cite{Nagata:60}).
It follows that $\phi$ is regular and maps into $\P^5$.

Because 
the self-intersection number of the linear system is 5, and this
is equal to the degree of the image times the mapping degree, we
also conclude that $\phi$ is birational and the image is a surface $F$
of degree~5.

To show that the image is nonsingular, we start by resolving the 5 base
points. Let $\Y$ be the blowup of these 5 points. Then $\phi$ induces
a regular map from $\Y$ to the image surface $F$, associated to a divisor class $H$.
We can write $H=5L-2E$, where $L$ is the pullback of the class of lines
and $E=E_1+\dots+E_5$ is the class of the exceptional divisor of the blowup consisting
of 5 components. Note that $L^2=1$, $LE=0$ and $E^2=-5$.
The only curve on $\Y$ which is contracted to a point is the proper transform
of the conic $C$, with class $2L-E$, because this is the only curve with 
intersection number $0$ with $H$. This is a -1-curve, hence the image is nonsingular
by Theorem~21.5 of \cite{Manin:74}.

Finally, the lines on $F$ are precisely the curves $D$ on $\Y$ 
such that $D\cdot H=1$. This is only possible if $D$ is the proper
transform of a line through two points in $B$, with class $L-E_i-E_j$,
$1\le i<j\le 5$. These are only
finitely many. Then the image is not a ruled surface, therefore
it is a Del Pezzo surface.
\end{proof}

\begin{amp} \label{amp:example2}
Consider $Q=x^5-1$ and let $p_1=(\zeta_5^2:\zeta_5:1)$, $p_2=(\zeta_5^4:\zeta_5^2:1)$, $p_3=(\zeta_5:\zeta_5^3:1)$, $p_4=(\zeta_5^3:\zeta_5^4:1)$ and $p_5=(1:1:1)$ where $\zeta_5$ is a primitive $5$-root of unity. 
Let $V$ be the space of quintics in $\Q [t_0,t_1,t_2]$
vanishing with multiplicity $2$ at $p_1$, $p_2$, $p_3$, $p_4$ and $p_5$. A base of $V$ is the following
\begin{align*}
&P_1=t_0^5 - 5t_0t_1^2t_2^2 + 2t_1^5 + 2t_2^5,
&P_2=    t_0^4t_1 - 2t_0^2t_1^2t_2 + t_1^3t_2^2,\ \ \ \ \ \ \ &\\ 
&P_3=    t_0^4t_2 - 2t_0^2t_1t_2^2 + t_1^2t_2^3,
&P_4=    t_0^3t_1^2 - t_0^2t_2^3 - t_0t_1^3t_2 + t_1t_2^4,&\\ 
&P_5=    t_0^3t_1t_2 - 3t_0t_1^2t_2^2 + t_1^5 + t_2^5,
&P_6=    t_0^3t_2^2 - t_0^2t_1^3 - t_0t_1t_2^3 + t_1^4t_2.&
\end{align*}
The map $p\to (P_1(p):P_2(p):P_3(p):P_4(p):P_5(p):P_6(p))$ defines a parametrization of a 
Del Pezzo suface $S$ of degree $5$. The Del Pezzo 
surface $S$ is defined by the five  quadrics in $\Q [x_0,x_1,x_2,x_3,x_4,x_5]$:
\begin{eqnarray*}
&   x_1x_5 - x_2x_4 + x_3^2,\\
&    x_1x_4 - x_2x_3 - x_5^2,\\
&    x_0x_5 + x_1x_3 - x_2^2 - 2x_4x_5,\\
&    x_0x_4 - x_1x_2 + x_3x_5 - 2x_4^2,\\
&    x_0x_3 - x_1^2 + x_2x_5 - 2x_3x_4.
\end{eqnarray*}
\end{amp}

In section~\ref{sec:complete}, we will show that every quintic Del
Pezzo surface is isomorphic to a surface constucted as above with
some suitable seed. However, it is not true that different choices of the seed lead to
non-isomorphic surfaces. For instance, if $Q$ is a product of linear
factors, then the surface $F$ is $k$-isomorphic to Example~1, for
all choices of the linear factors. See section~\ref{sec:recog} for
more details.

\section{Parametrization} \label{sec:par}

In this section, we describe an algorithm that takes as an input the
defining equations of a quintic Del Pezzo surface $F$ -- these are  5 quadratic
equations in 6 variables with coefficients in $k$ --, and produces
a proper rational parametrization, i.e. a birational map from $\P^2$ to $F$
defined over $k$.

\paragraph{First Step.}
We construct a point on $F$ with coordinates in $k$. 
The first substep of this step is
to construct a point on $F$ with coordinates in $k$ or in a 
quadratic extension of $k$.

Let $R:=k[x_0,\dots,x_5]$ be the graded coordinate ring of $\P^5$.
The ideal of $F$ is generated by 5 quadratic equations, 
by Theorem~\ref{thm:res}. By the same theorem, we also know the
degree and the number of syzygies of the ideal.

\begin{lem}
Let $P_1,\dots,P_5\in R$ be generators of the ideal of $F$.
Let $V_1,\dots,V_5$ be generators of the syzygy module
$\mathrm{Syz}_{(P_1,\dots,P_5)}$. For $i=1,\dots,5$, let
$V_5=(V_{5,1},\dots,V_{5,5})$ be the last syzygy vector.
Then the zero set $L$ of $(V_{1,5},\dots,V_{5,5})$ is a linear
subspace of $\P^5$ of dimension~1 or 2. Moreover, the intersection
of $L$ and the zero set of $P_5$ is contained in $F$.
\end{lem}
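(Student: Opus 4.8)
The plan is to work throughout in the skew-symmetric normal form provided by Theorem~\ref{thm:res}: after the suitable choice of basis, the matrix $M=(V_{ij})_{i,j}$ is skew-symmetric, its rows $V_1,\dots,V_5$ generate the syzygy module, and the ideal generators $P_1,\dots,P_5$ may be taken to be the five first Pfaffians, where $P_i$ is the Pfaffian of the $4\times 4$ submatrix obtained by deleting row $i$ and column $i$. In this presentation, the tuple $(V_{1,5},\dots,V_{5,5})$ cutting out $L$ is exactly the fifth column of $M$. Since $M$ is skew-symmetric, the diagonal entry $V_{5,5}=M_{55}$ is zero, so $L$ is cut out by the at most four linear forms $M_{15},M_{25},M_{35},M_{45}$ in $\P^5$; hence $\dim L\ge 5-4=1$, giving the lower bound at once.

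For the containment I would first record the elementary fact that a $4\times 4$ skew-symmetric matrix with a vanishing row (hence, by skew-symmetry, a vanishing conjugate column) has zero Pfaffian, because each monomial in $\mathrm{Pf}=M_{12}M_{34}-M_{13}M_{24}+M_{14}M_{23}$ contains a factor from that row or column. Now take $p\in L$. By definition the fifth column of $M(p)$ vanishes, and by skew-symmetry the fifth row vanishes too. For each $i\ne 5$, deleting row $i$ and column $i$ leaves a $4\times 4$ skew-symmetric matrix that still contains the (reindexed) fifth row and column of $M(p)$, which are zero; hence $P_i(p)=0$. Thus every point of $L$ automatically satisfies $P_1=\dots=P_4=0$, and if in addition $P_5(p)=0$ then $p\in V(P_1,\dots,P_5)=F$. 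This proves $L\cap V(P_5)\subseteq F$.

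It remains to establish the upper bound $\dim L\le 2$, which I expect to be the step genuinely using the geometry of $F$ rather than pure linear algebra. I would argue by contradiction: if $\dim L\ge 3$, then intersecting the linear space $L$ with the single hypersurface $V(P_5)$ drops the dimension by at most one, so $\dim\bigl(L\cap V(P_5)\bigr)\ge 2$, and this intersection is nonempty by the projective dimension theorem. By the containment just proved this yields a closed subvariety of $F$ of dimension $\ge 2$; since $F$ is irreducible of dimension $2$, it would have to be all of $F$, forcing $F=L\cap V(P_5)\subseteq L$. But the anticanonical model of a degree-$5$ Del Pezzo surface is linearly normal and nondegenerate, so $F$ spans $\P^5$ and cannot lie in a proper linear subspace, a contradiction. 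Hence $\dim L\le 2$, and together with the lower bound $L$ has dimension $1$ or $2$.

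The main obstacle is the bookkeeping in the second paragraph: transferring the vanishing of the fifth column to the fifth row via skew-symmetry, and then tracking which single Pfaffian survives (namely $P_5$, since it is the one whose defining submatrix avoids the fifth row and column). Once this Pfaffian vanishing on $L$ is in hand, both dimension bounds follow formally, the lower one from skew-symmetry of the diagonal and the upper one from irreducibility and nondegeneracy of $F$.
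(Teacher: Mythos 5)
Your proof is correct, but it reaches the key containment by a genuinely different route than the paper. The paper never invokes the skew-symmetric normal form for that step: it observes that the ideal $I_1=\langle V_{1,5},\dots,V_{5,5}\rangle$ is exactly the colon ideal $\langle P_1,\dots,P_4\rangle:\langle P_5\rangle$ --- since $A_5P_5=-(A_1P_1+\cdots+A_4P_4)$ for every syzygy $(A_1,\dots,A_5)$ --- whence $\langle P_1,\dots,P_4\rangle\subseteq I_1$ and $L\cap V(P_5)=V(I_1+\langle P_5\rangle)\subseteq V(P_1,\dots,P_5)=F$. Your Pfaffian-expansion argument proves the same inclusion $L\subseteq V(P_1,\dots,P_4)$ by explicit linear algebra, which is a nice, very concrete alternative, but it only works after passing to the normal form, whereas the colon-ideal identity is valid for whatever generators are handed to you. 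Your upper bound is essentially the paper's (if $\dim L\ge 3$ then $L\cap V(P_5)$ is a subvariety of $F$ of dimension $\ge 2$); you finish with nondegeneracy of $F$ in $\P^5$, the paper finishes with degree (a surface of degree $\le 2$ cannot lie inside an irreducible quintic surface) --- both are sound. Your lower bound is identical to the paper's: skew-symmetry kills the diagonal entry, so $L$ is cut out by at most four linear forms.

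The one loose end is the opening reduction ``work throughout in the skew-symmetric normal form.'' The lemma is stated for arbitrary generators $V_1,\dots,V_5$ of the syzygy module, and $L$ is defined from the fifth coordinates of those given generators; so you must check that replacing them by the normal-form basis of Theorem~\ref{thm:res} does not change $L$. This is true and easy: any two generating sets express each other as $R$-combinations, so their fifth coordinates generate the same ideal --- equivalently, that ideal is intrinsically the colon ideal $\langle P_1,\dots,P_4\rangle:\langle P_5\rangle$. The paper gets this canonicity for free from its colon-ideal identity, which is also what licenses its own passage to the skew basis in the last paragraph of its proof; in your setup it needs to be said explicitly, but it is a one-sentence patch rather than a flaw in the main argument.
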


\begin{proof}
Let $I_1:=\langle V_{1,5},\dots,V_{5,5}\rangle_R$,
$I_2:=\langle P_1,\dots,P_4\rangle_R$, and $I_3:=\langle P_5\rangle_R$.
Then we have $I_1=I_2:I_3$ and it follows $I_2\subseteq I_1$
and $I_2+I_3\subseteq I_1+I_3$, hence the common zero set of $I_1$
and $I_3$ is contained in the zero set of $I_2+I_3$, which is $F$.
This shows the second assertion. 

If the $L$ were a linear space of dimension~3 or higher,
then the zero set of $I_1+I_3$ would be a quadratic surface or even higher
dimensional. But this zero set must be contained in $F$, so this is
not possible. This shows $\dim(L)\le 2$. 

By a suitable choice of the generators $V_1,\dots,V_5$, we can
achieve that the matrix $(V_{ij})_{i,j}$ is skew symmetric.
Then $L$ is the common zero set of $V_{5,1},\dots,V_{5,4}$.
This shows $\dim(L)\ge 1$.
\end{proof}

The first substep of constructing a point on $F$ with coordinates
in $k$ or in a quadratic extension is now easy to describe: 
the intersection of $L$ and the zero set of $F_5$ is defined over $k$,
and it is either a line, or a point, or a set of two points, 
or a plane conic (maybe reducible). 
In the first case, we take any point on the line.
In the second case, we take the point.
In the third case, we take one of the two points.
In the fourth case, we choose any line in $L$ and intersect it with
$F_5$, and we have one of the first three cases.
The generic case is that $L$ is a line intersecting $F$ in two points,
which are conjugate in a quadratic field extension.

\begin{rem}
A closer analysis shows that in the case where $L$ is a line, the
intersection is either a point or two points; and if $L$ is a plane,
then the projection $\P^5\dashrightarrow\P^2$ with center $L$ restricts to
a birational map $F\dashrightarrow\P^2$, which gives a shortcut
to the parametrization problem. We will omit the proof because it
is not necessary for the correctness proof of the algorithm.
\end{rem}

The second substep is necessary if the constructed point $q\in F$ has
coordinates in a quadratic extension $K$. Then there is also a conjugate
point $\bar{q}\in F$.

\begin{lem}
Let $q\in F$ be a point with coordinates in a quadratic field extension 
$K$ of $k$ (but not in $k$) and $\bar{q}$ its conjugate. Assume that the line $q\bar{q}$ 
is not contained in $F$. Then there exist at most $5$ 
tangent lines $T\in T_qF$ such that either $T$ and its conjugate $\bar{T}\in T_{\bar{q}}F$
are coplanar, or the $3$-plane $N$ generated by $T$ and $\bar{T}$ 
intersects $F$ in a set of positive dimension.

In all remaining cases, either $T$ is contained in $F$, and $N\cap F=T\cup\bar{T}\cup \{p\}$ where $p$ is a point in $F$ defined over $k$, or $N$ intersects $F$ in precisely $3$ points, namely $q$ and  
$\bar{q}$ (both with multiplicity $2$), and a third point $p$ which is defined over $k$. 
\end{lem}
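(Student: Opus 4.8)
The plan is to exploit that the $3$-plane $N=\langle T,\bar T\rangle$ is automatically defined over $k$. The nontrivial element $\sigma$ of $\mathrm{Gal}(K/k)$ interchanges $T$ and $\bar T$, hence fixes their span $N$, and it also fixes the line $m:=q\bar q\subseteq N$. Thus the scheme $N\cap F$ is defined over $k$ and $\sigma$ permutes its points while swapping $q$ and $\bar q$. Granting this, any residual point that survives after removing the contributions of $q$ and $\bar q$ must be $\sigma$-stable, hence $k$-rational; this is what will produce the point $p$ in both good cases. So the whole problem reduces to understanding $N\cap F$ as a cycle.

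First I would treat the generic situation, where $N\cap F$ is finite and $T\not\subseteq F$. Since $F$ has degree $5$ and $N\cong\P^3$ meets $F$ properly, the $0$-cycle $N\cap F$ has total degree $5$. Because $T\subseteq T_qF\cap N$ is a tangent direction of $F$ lying in $N$, the local intersection multiplicity of $N$ and $F$ at $q$ is at least $2$, and the conjugate statement gives the same at $\bar q$. Applying $\sigma$ (which carries $N$ to $N$ and $q$ to $\bar q$) shows these two multiplicities are equal, so $2\mu+(\text{rest})=5$ with $\mu\ge 2$ forces $\mu=2$ and a residual of degree exactly $1$. Hence $N\cap F=\{q,\bar q,p\}$ with multiplicities $2,2,1$, and $p$ is $k$-rational by the Galois argument: this is the second good case. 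The case $T\subseteq F$ is disposed of similarly: then $T$ is one of the lines of $F$ through $q$ (so $T\ne\bar T$, else $m=T\subseteq F$, contrary to hypothesis), both $T$ and $\bar T$ lie in $N$, and a residual-intersection computation against the total degree $5$ leaves a single point $p$, again $k$-rational.

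It remains to bound the bad directions, which I expect to be the main obstacle. The tangent lines $T$ form a pencil $\P^1$, namely the lines through $q$ in the plane $T_qF$, so the $3$-planes $N(T)$ form a one-parameter family, all containing the fixed line $m$. A direction is bad precisely when $T$ and $\bar T$ fail to be skew (condition (a)), or when $N(T)$ contains a curve of $F$ other than $T\cup\bar T$ (the unwanted part of condition (b)). The source of the bound $5$ should be the five pencils of conics on a quintic del Pezzo surface: on the four-point blow-up model of $F$, with line class $L$ and exceptional classes $E_1,\dots,E_4$, these are the classes $L-E_i$ for $1\le i\le 4$ together with $2L-E_1-E_2-E_3-E_4$, each satisfying $C^2=0$ and $-K\cdot C=2$. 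Through the general point $q$ there passes exactly one conic from each pencil, giving five distinguished tangent directions at $q$, and I would show that $N(T)$ acquires an extra curve of $F$ exactly for these directions.

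The delicate points, and the crux of the argument, are to prove that no further degeneration—in particular none coming from the ten lines of $F$—produces additional bad directions, to fold the coplanarity locus of condition (a) into the same count, and to verify that the total never exceeds $5$ rather than merely being finite. Concretely, I would realize all of these conditions as the vanishing of a single form on the parameter $\P^1$ and read off its degree from $H^2=5$; identifying the correct divisor class on $\P^1$ whose degree equals $5$, and checking that the five conic directions are counted without overlap, is where the real work lies.
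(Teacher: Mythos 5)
Your handling of the ``remaining cases'' is correct, and in fact more direct than the paper's: $N=\langle T,\bar T\rangle$ is Galois-stable hence defined over $k$; a proper intersection of the $3$-plane $N$ with the degree-$5$ surface $F$ is a $0$-cycle of total degree $5$; tangency forces local multiplicity $\ge 2$ at $q$ and, by conjugation, the same at $\bar q$; so the residual cycle is a single reduced point, fixed by the Galois action since it is the unique point of multiplicity $1$, hence $k$-rational. (The paper gets this instead by projecting to a cubic surface $S\subset\P^3$ and using that a line meets $S$ in three points.) One caveat even here: your sub-case $T\subseteq F$ is \emph{not} disposed of ``similarly'' --- then $N\cap F$ contains the curves $T$ and $\bar T$, the intersection is no longer proper, and naive B\'ezout says nothing; this case needs excess-intersection bookkeeping or the paper's projection argument, under which it reads off from $n\cap S=\{t,\bar t,x\}$.

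The genuine gap is the bound of $5$, which is the heart of the lemma, and your proposed mechanism for it is incorrect. The paper projects $\P^5\dashrightarrow\P^3$ from the line $q\bar q$; since $q\bar q\not\subseteq F$ this restricts to the blow-up of $F$ at $q,\bar q$, with image a cubic surface $S$; the tangent planes $T_qF$, $T_{\bar q}F$ map to lines $\ell,\bar\ell\subseteq S$, a tangent direction $T$ maps to a point $t\in\ell$, and $N(T)$ maps to the line joining $t$ and $\bar t$. Bad directions are exactly those with $t=\bar t\in\ell\cap\bar\ell$ (coplanarity --- and the paper first proves $T_qF\cap T_{\bar q}F$ contains no line, so this contributes at most one direction) or with the line $t\bar t$ contained in $S$; the bound then comes from the classical fact that two disjoint lines on a smooth cubic surface have exactly $5$ common transversals. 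Pulled back to $F$, those transversals are the strict transforms of the five twisted cubics through \emph{both} $q$ and $\bar q$, i.e.\ the curves in the classes $L$ and $2L-E_i-E_j-E_k$ (with $C^2=1$, $-K\cdot C=3$), one through any general pair of points --- \emph{not} the conics of the five pencils through $q$ alone. A conic through $q$ satisfies no condition involving $\bar q$, and there is no reason its tangent direction $T$ should make $N(T)$ contain any curve: the plane of that conic need not lie in $N(T)$ at all. (Conversely, if $N(T)\cap F$ contains an irreducible Galois-stable curve $D$ through $q$ and $\bar q$ spanning $N(T)$, then $D$ has degree $3$.) There is also a structural obstruction to your fallback plan of ``realizing the bad conditions as the vanishing of a single form on the parameter $\P^1$'': the assignment $T\mapsto\bar T$ is Galois-semilinear, not a morphism of $\bar k$-varieties, so the bad locus is not the zero scheme of a form in $t\in\ell\cong\P^1$; the finiteness and the count must come from incidence geometry that does not vary $t$ algebraically, which is precisely what the projection to the cubic surface provides. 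As written, the central ``at most $5$'' claim remains unproven in your proposal.
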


\begin{proof}
We will show that $T_qF\cap T_{\bar{q}}F$ is either a point or the empty set. 
Let us first prove that the intersection $T_qF\cap T_{\bar{q}}F$ does not contain the line $q\bar{q}$. 
If $q\bar{q}\subseteq T_qF\cap T_{\bar{q}}F$, then 
$q\bar{q}$ is tangent to $F$ at the points $q$ and $\bar{q}$. We claim that, in such a case, the line $q\bar{q}$ would be in
$F$ which would be in contradiction with the hypothesis. Indeed, if $q\bar{q}$ is not in $F$ one can 
take $x$ and $y$ two generic points in 
$F$, then the $3$-pane generated by $q$, $\bar{q}$, $x$ and $y$ intersects $F$ 
in $q$ and $\bar{q}$ with multiplicity $2$ and in 
$x$ and $y$ with multiplicity $1$ which contradicts the degree of $F$ being $5$. Therefore, the line $q\bar{q}$ is not contained in  $T_qF\cap T_{\bar{q}}F$.

Suppose now that $T_qF\cap T_{\bar{q}}F =m\neq q\bar{q}$
where $m$ is a line in $\P^5$ defined over $k$. Then neither $q$  nor $\bar{q}$ is in $m$ (since if $q \in m$, so is $\bar{q}$ and therefore $m=q\bar{q}$). Now we define $\pi :\P^5\to \P^3$ to be the projection from the points $q$ and $\bar{q}$. Then 
$\pi\mid_F$ is the blow up of $F$ at the points $q$ and $\bar{q}$ and the Zariski closure
of $\pi (F)$ is a cubic surface $S$ in $\P^3$ (note that $S$ is a cubic surface if and only if $q\bar{q}$ is not contained in $F$). If $T_qF\cap T_{\bar{q}}F =m$ and $q$ and $\bar{q}$ are 
not in the line $m$, then both tangent planes $T_q(F)$ and 
$T_{\bar{q}}(F)$ map to the same line $\ell$ in $S$. This line would then be the exceptional divisor of two different points (or tangent directions) in $\P^2$, among the six that are blown up, which is not possible.

Now we are in the situation that $T_qF\cap T_{\bar{q}}T$ is either a point or the empty set. 
In this situation the tangent planes $T_q(F)$ and $T_{\bar{q}}(F)$ map 
to (conjugate) lines $\ell$ and $\bar{\ell}$ in $S$. If $T_q(F)$ and $T_{\bar{q}}(F)$ intersect at a point, 
then the lines $\ell$ and $\bar{\ell}$ intersect at a point $s\in S$. In such a case there is another line in $S$ 
intersecting both $\ell$ and $\bar{\ell}$. If $T_q(F)$ and $T_{\bar{q}}(F)$ do not intersect, then 
$\ell$ and $\bar{\ell}$ are disjoint and therefore there exist at most $5$ 
lines in $S$ intersecting both $\ell$ and $\bar{\ell}$ (see \cite[Lemma 1.2]{Polo_Top:08}).

Via the projection $\pi$ the plane $N$ goes to a line $n$ in $\P^3$ defined over $k$ 
that intersects both $\ell$ and $\bar{\ell}$ at two conjugate points $t=\pi(T)$ and $\bar{t}=\pi(\bar{T})$ respectively. 
When the line $n$ is a line in $S$, we are in the previous cases described above. Otherwise $n\cap S=\{t,\bar{t},x\}$ where $x$ is a point in $S$ defined over $k$. 
In this case $N\cap F=T\cup\bar{T}\cup \{p\}$ if $T\subseteq F$ or $N\cap F=\{ q,\bar{q}, p\}$ otherwise, with $p$ being a point defined over $k$ and $\pi (p)=x$.
\end{proof}

The Lemma gives a construction for a $k$-rational point on $F$ from
a point $q\in F$ defined over a quadratic extension $K$: 
let $r\ne q$ be a point with coordinates in $K$ in $T_qF$, but not
in $(T_qF\cap T_{\bar{q}}F)$. Let $\bar{r}$ be its conjugate.
Then intersect the 3-plane $N$ generated by $q,\bar{q},r,\bar{r}$
with $F$. If we are not unlucky, then the intersection contains a single point defined
over $k$. We can be unlucky at most $5$ times.   

\begin{rem}
In \cite{Sheperd-Barron:92}, Sheperd-Barron suggests to choose the line $T$ 
(or the line $qr$ in the above construction) parametrically, and compute the moving
intersection point in terms of this parameter. The parameter can be
chosen as an element in $K$, or equivalently two elements in $k$.
We implemented this method also in Magma, but the computing time
is larger than in the method we suggest in the following. 
Moreover, this method leads to a parametrization of algebraic degree~10, 
which is twice as large as the degree of the parametrization computed by
the method below.

In \cite[Exercise 3.1.4]{Hassett:09}, Hassett gives another method for constructing 
a $k$-rational point: the zero set of three generic quadrics in the ideal of $F$ 
decomposes into $F$ and a cubic rational scroll. The scroll has a unique -1-curve, which
intersects $F$ in a single $k$-rational point. However, generic choices are not free,
they increase the coefficients, and it seems not so easy to analyze the non-generic
cases for this method.

Still another method explained in \cite{Swinnerton:72} is due to Enriques: by
generic projection, one obtains an image $F'$ of $F$ in $\P^3$ with a rational quintic
double curve. It can be parametrized over $k$; compute two points on it. The line
through the two points generically intersects $F'$ in a single smooth point which
lifts back to a $k$-rational point on $F$. This method is computationally very
expensive, as we observed by testing it with a few simple examples.
\end{rem}

\paragraph{Second step.}
Given a point $p\in F$ defined over $k$, we consider
the projection map $\pi:\P^5\dashrightarrow\P^2$ with projection 
center equal to the tangent plane $T_pF$. The restriction
of $\pi$ to $F$ will be birational.

\begin{lem}
With the notation as above, the restriction $\pi|_F:F\dashrightarrow\P^2$
is birational.

If $p$ does not lie on one of the 10 lines, then the parametrization
has algebraic degree~5. If it lies on at least one line, then
the algebraic degree is smaller than 5.
\end{lem}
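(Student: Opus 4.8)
The plan is to reduce everything to intersection theory on a blow-up of $\P^2$, using the model of Theorem~\ref{thm:gpar}. First I would fix an isomorphism $F\cong \mathrm{Bl}_{q_1,\dots,q_4}\P^2$ with $q_1,\dots,q_4$ in general position, so that the hyperplane class is the anticanonical class $H=-K=3\ell-e_1-e_2-e_3-e_4$, where $\ell$ is the pullback of the line class and $e_1,\dots,e_4$ the exceptional classes; here $\ell^2=1$, $\ell\cdot e_i=0$, $e_i\cdot e_j=-\delta_{ij}$ and $H^2=5$. Projection from a plane in $\P^5$ lands in $\P^{5-2-1}=\P^2$, and a hyperplane contains $T_pF$ exactly when the hyperplane section it cuts on $F$ is singular at $p$; thus $\pi|_F$ is given by the system of anticanonical curves with a double point at $p$. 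I would therefore blow up $F$ at $p$, writing $\sigma\colon\widetilde F\to F$ with exceptional curve $e_5$, so that $\widetilde F$ is the blow-up of $\P^2$ at the five points $q_1,\dots,q_4,q_5$ ($q_5$ the image of $p$), and the system defining $\pi|_F$ becomes the movable part of $|\Lambda|$ with $\Lambda=\sigma^*H-2e_5=3\ell-e_1-e_2-e_3-e_4-2e_5$.

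For birationality the key observation is that two general hyperplane sections through $T_pF$ meet along $F$ in $H^2=5$ points, of which $4=2\cdot 2$ are absorbed at the double point $p$, leaving a single residual point; equivalently $\Lambda^2=(\sigma^*H)^2-4\,\sigma^*H\cdot e_5+4e_5^2=5-0-4=1$. Hence the general fibre of $\pi|_F$ is one reduced point and the map is birational onto $\P^2$. When $p$ lies on none of the ten lines the five points are in general position, $\widetilde F$ is a degree-$4$ del Pezzo surface, and a short check shows $\Lambda$ meets every $(-1)$- and $(-2)$-curve non-negatively, so $\Lambda$ is nef and $|\Lambda|$ has no extraneous base points; with $\dim|\Lambda|=2$ by Riemann--Roch this confirms birationality with no surprises.

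To read off the algebraic degree I would invert, writing the parametrization as $\psi=\sigma\circ(\pi|_F)^{-1}\colon\P^2\dashrightarrow F\subseteq\P^5$. Its degree is the degree of the plane curve obtained by pushing a general hyperplane section of $F$ forward through $\pi|_F$, which by the projection formula equals $\sigma^*H\cdot\Lambda$. For $p$ off the ten lines this is $\sigma^*H\cdot\Lambda=(\sigma^*H)^2-2\,\sigma^*H\cdot e_5=5-0=5$, so the algebraic degree is $5$.

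Finally, the degree drop is caused by a fixed component of $|\Lambda|$. If $p$ lies on the join $L_{ij}$, then $q_5$ is collinear with $q_i,q_j$, and any cubic double at $q_5$ through $q_i,q_j$ meets the line $q_iq_jq_5$ in at least $2+1+1=4$ points, hence contains it; so $\ell-e_i-e_j-e_5$ splits off and the movable class becomes $2\ell-e_a-e_b-e_5$ with $\{a,b\}=\{1,2,3,4\}\setminus\{i,j\}$, still of square $1$ (so $\pi|_F$ stays birational) but now of degree $\sigma^*H\cdot(2\ell-e_a-e_b-e_5)=6-2=4<5$. If instead $p$ lies on an exceptional line $E_i$, then $q_5$ is infinitely near $q_i$ and the $(-2)$-curve $e_i-e_5$ has $\Lambda\cdot(e_i-e_5)=1-2=-1<0$, so it splits off and the analogous computation again yields degree $4$. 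The main obstacle I anticipate is exactly this case analysis: checking in each degenerate configuration that the movable part still has self-intersection $1$, so that birationality survives, while identifying the fixed component responsible for the strictly smaller degree and ruling out any further base points.
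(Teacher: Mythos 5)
Your proof is correct and takes essentially the same route as the paper's: birationality from the count $5-4=1$ of moving intersections of two hyperplane sections singular at $p$, the value $5$ as the degree of the image of a generic line, and the degree drop coming from lines through $p$ splitting off as fixed components. Your blow-up/divisor-class formulation is simply a more rigorous rendering of that argument --- and in fact it patches the one soft spot in the paper's version, namely that when $p$ lies on a line the two generic hyperplane sections through $T_pF$ share that line as a common component, so the naive $5-4=1$ count needs exactly the movable-part self-intersection check you carry out.
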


\begin{proof}
Two generic hyperplanes through $T_pF$ intersect in 5 points,
including the intersection at $p$. The intersection multiplicity at $p$
is equal to 4, because hyperplane sections through $T_pF$ have a
double point at $p$. Hence there is exactly one moving intersection,
and so the map $\pi|_F$ is birational.

The algebraic degree of the inverse map is equal to the number of intersections
of the image of a generic line in the parameter plane and a generic
hyperplane. In this case, the image of a generic line is also a hyperplane,
hence the algebraic degree is 5. When $p$ lies on a line, then the
line is a fixed component reduces the algebraic degree by 1. If $p$
lies on 2 lines, then the mapping degree drops by 2.
\end{proof}

\begin{rem}
In case $F$ is a minimal Del Pezzo surface, i.e. it does not have
Galois orbits of pairwise disjoint lines, then the smallest possible
algebraic degree of a parametrization is 5 (see \cite{Schicho:06b}).
So in this case the given construction has smallest possible degree.
\end{rem}

\subsection{Completeness of the Construction} \label{sec:complete}

To prove that any Del Pezzo surface $F$ of degree~5 is $k$-isomorphic
to a surface constructed by the method in section~\ref{sec:cons},
it suffices to show that $F$ has a parametrization defined by quintics
passing with multiplicity~2 through 5 points $q_1,\dots,q_5$ in general
position (recall that this only means that no three of these points 
are collinear). We will do that in this short section; another proof
of the completion is given in section~\ref{sec:recog} through the
classification.

It is clear that $F$ does contain a point $p$ with
coordinates in $k$ and is not contained in one of the 10 lines. 
Indeed, we have already seen that $F$ has a parametrization, hence
the set of all points defined over $k$ is Zariski-dense and can therefore
not be contained in the union of the 10 lines.

\begin{thm}
Assume that $p\in F$ is defined over $k$ and not contained in one of the
10 lines on $F$. Then the inverse of the birational projection $F\to\P^2$
from the tangent line $T_pF$ is a parametrization defined by quintics
passing with multiplicity~2 through 5 points in general position.
\end{thm}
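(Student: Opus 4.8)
The plan is to pass to the blow-up of $F$ at $p$, carry out the computation in its Picard lattice, and read off the parametrizing linear system.

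First I would introduce $\sigma\colon\tilde F:=\mathrm{Bl}_pF\to F$ with exceptional $(-1)$-curve $e_p$. Since the $(-1)$-curves of $F$ are exactly its $10$ lines and $p$ lies on none of them, the class $-K_{\tilde F}=\sigma^*(-K_F)-e_p$ meets every curve positively, so $\tilde F$ is a del Pezzo surface of degree $(-K_F)^2-1=4$. A hyperplane of $\P^5$ containing the tangent plane $T_pF$ cuts $F$ in an anticanonical curve with a double point at $p$; pulling back to $\tilde F$ and removing the fixed part $2e_p$, the net of such hyperplanes becomes the linear system $D:=\sigma^*(-K_F)-2e_p$, and by the birationality of $\pi|_F$ established above the associated map $\tilde F\to\P^2$ is birational.

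Next I would compute in $\mathrm{Pic}(\tilde F)$. Using $\sigma^*(-K_F)\cdot e_p=0$ and $e_p^2=-1$ one gets $D^2=1$ and $(-K_{\tilde F})\cdot D=3$, whence Riemann--Roch together with the vanishing of higher cohomology on a del Pezzo surface gives $h^0(D)=3$. Thus $D$ defines a birational morphism $\rho\colon\tilde F\to\P^2$; being a birational morphism of smooth surfaces with $D^2=1$, it is a composition of blow-downs contracting the five $(-1)$-curves orthogonal to $D$, and hence realizes $\tilde F$ as the blow-up of $\P^2$ at five points $q_1,\dots,q_5$ with $D=L'$ and exceptional curves $E_1',\dots,E_5'$ (this is the blow-down structure of Theorem~\ref{thm:general} applied to $\tilde F$). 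In this basis $e_p$ is a $(-1)$-curve with $L'\cdot e_p=D\cdot e_p=2$; writing $e_p=2L'-\sum_i c_iE_i'$, the conditions $e_p^2=-1$ and $(-K_{\tilde F})\cdot e_p=1$ force $\sum c_i^2=\sum c_i=5$, so $c_i=1$ for all $i$ and $e_p=2L'-\sum_{i=1}^5 E_i'$ is the proper transform of the conic through the $q_i$.

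Finally I would assemble the parametrization. Since $\sigma^*(-K_F)=D+2e_p=5L'-2\sum_{i=1}^5 E_i'$, and since $F\hookrightarrow\P^5$ is the embedding by $|-K_F|$, the inverse map $\psi=\pi|_F^{-1}\colon\P^2\dashrightarrow F$ is defined by the push-forward $\rho_*|\sigma^*(-K_F)|$, i.e.\ by the linear system of plane quintics having double points at $q_1,\dots,q_5$. The five points are in general position (no three collinear) precisely because $\tilde F$ is a del Pezzo surface of degree $4$, and the set $\{q_1,\dots,q_5\}$ is defined over $k$ since $p$, and hence $\sigma$ and $\rho$, are defined over $k$. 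I expect the main obstacle to be the geometric identification at the start --- that containing the whole plane $T_pF$ imposes exactly an ordinary double point at $p$ and introduces no further base conditions, so that the projection net is exactly $\sigma^*(-K_F)-2e_p$ and $\rho$ is an honest blow-down --- after which the lattice numerology is entirely forced.
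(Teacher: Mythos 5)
Your proof is correct, but it takes a genuinely different route from the paper's. The paper stays on the $\P^2$ side: having established in the preceding lemma that the projection is birational of algebraic degree~5, it imposes the numerical constraints on the multiplicities $m_i$ of the base points of the parametrizing system --- $25-\sum m_i^2=5$ from the five moving intersections and $6-\sum m_i(m_i-1)/2=1$ from the genus of a generic hyperplane section --- which leave exactly two solutions, $(2,2,2,2,2)$ and $(3,2,2,1,1,1)$; it then excludes the second by an ad hoc argument about fundamental curves, and excludes collinear or infinitely near base points in the first case by fixed-component and smoothness arguments. You instead work on the $F$ side: blow up $p$, note that $\mathrm{Bl}_pF$ is a del Pezzo surface of degree~4 precisely because $p$ avoids the $10$ lines, identify the projection with the system $D=-K_{\tilde F}-e_p$, and let the Picard-lattice computation force $e_p=2L'-\sum E_i'$ and hence $\sigma^*(-K_F)=5L'-2\sum E_i'$. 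Your approach buys a case-free argument: the spurious numerical solution the paper must rule out never arises, general position of the $q_i$ (including the absence of infinitely near points, which you should state explicitly) follows at once from ampleness of $-K_{\tilde F}$, and you get the extra geometric fact that the five points lie on a conic, namely the image of $e_p$. The price is heavier machinery: Nakai--Moishezon for the del Pezzo property of the blow-up, vanishing of higher cohomology for $h^0(D)=3$, the factorization of birational morphisms of smooth surfaces, and --- the one step you gloss over, though you correctly flag it --- that $|D|$ is base point free so that $\rho$ is a morphism; this can be settled by checking that $D$ is nef (distinct $(-1)$-curves on a quartic del Pezzo meet in at most one point, so $D\cdot C=1-e_p\cdot C\ge 0$) and that nef classes on del Pezzo surfaces are free. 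The paper's argument needs only intersection numbers and the genus formula in the plane, at the cost of the case analysis.
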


\begin{proof}
We have already proven that the algebraic degree of the parametrization 
is 5, hence the parametrization is defined by quintics.
Let $q_1,\dots,q_n$ be the base points of the parametrization
(including infinitely near), and let $m_1,\dots,m_n$ be their
respective multiplicities. The number of moving intersections of
two quintics in the linear system $\Gamma$ defining the parametrization is
equal to 5, and the genus of a generic element is equal to 1, because
it is birational to a generic hyperplane section of $F$. This gives
the numeric conditions
\[ 25 - m_1^2-\dots-m_n^2 = 5 , \]
\[ 6 - \frac{m_1(m_1-1)}{2}-\dots-\frac{m_n(m_n-1)}{2} = 1 . \]
This gives only two cases.

Case~1: $n=5,m_1=\dots=m_5=2$. In this case, any line through 3 of
the base points would have at least 6 intersections, counted with
multiplicity, with any quintic in $\Gamma$, hence it would be a fixed
component, which is not possible; it follows that no three base points
are collinear. Moreover, no base points is infinitely near to
another, because this would give rise to the blow down of a -2-curve
which would give a singular point on the surface $F$, in contrast
to the assumption that $F$ is nonsingular. Hence $q_1,\dots,q_5$
are double points of $\Gamma$, and they are in generic position.

Case~2: $n=6,m_1=3,m_2=m_3=2,m_4=m_5=m_6=1$. Then the parametrization
map $\P^2\dashrightarrow \P^3$ is the product of the blowup at the 6 base points
and some birational regular map blowing down the fundamental curves.
Because $F$ is $\bar{k}$-isomorphic to the blowup of $\P^2$ at 4 points,
the number of fundamental curves is 2. The fundamental curves are the
curves which have no moving intersection points with the curves in $\Gamma$,
and these are the two lines $L(q_1,q_2)$ and $L(q_1,q_3)$. Let $p_1$
be the image of the fundamental curve $L(q_1,q_2)$. When
we compose the parametrization with the projection from the tangent plane
at $p_1$, then we get the rational map defined by all quintics in $\Gamma$
having $L(q_1,q_2)$ as a double component. Canceling the common factor,
this is the system of cubics with double point in $q_2$ and passing
through $q_1,q_4,q_5,q_6$. On the other hand, this should be the identity
map, and this is not the case. So this case does not happen.
\end{proof}

\section{Deciding Isomorphy} \label{sec:recog}

In this section, we give an algorithm for deciding whether two given
anticanonically embedded Del Pezzo surfaces $F_1$ and $F_2$ of degree~5 are isomorphic
over $k$.
The section makes use of Galois cohomology and $k$-twists: an approach which gives an
alternative (non-constructive) proof of the parametrizability of degree 5 Del Pezzo surfaces
(See \cite{Skorobogatov:01}).
Through cohomology, we see that the isomorphism class over $k$ is determined by the action of Galois on
the graph of exceptional lines and we explain how to choose the seed in the construction in order to obtain any
prescribed isomorphism class. 
\cite{Serre:94}, \cite{Skorobogatov:01} and\cite{Polo:07} are basic references here.

We write $G_k$ for $G(\bar{k}/k)$. Let $F$ denote a 
degree 5 Del Pezzo over $k$. The key point for degree 5
is that all such $F$ are still isomorphic over $\bar{k}$ (because any two sets of four
points in the projective plane with no three collinear are conjugate under $PGL_3$),
so are Galois twists of each other, but that the automorphism groups are finite, so that
the $H^1$ Galois cohomology group defining the set of twists is relatively easy to 
describe. From this, it is easy to just write down an $F$ for each cohomology class,
thus giving representatives for all of the $k$-isomorphism classes.

The Appendix to Section 3.1 of \cite{Skorobogatov:01} gives a classification of these
twists. However, the description there is rather abstract: they are given as quotients
of the set of stable points of a Grassmannian by a twisted torus. We present the
twisting theory in a more elementary fashion here, leading directly to the concrete
description of the isomorphism classes in terms of seeds with given splitting behaviour
over $k$.

\subsection{Classification of $k$-isomorphism classes}

Let $F_0$ denote the standard ``split" surface, the projective plane $\P^2_k$ blown
up at the four $k$-rational points $(1:0:0)$, $(0:1:0)$, $(0:0:1)$, $(1:1:1)$
(see Example~1).

Let $E$ denote the incidence graph for the ten exceptional lines on $F$ which are
all defined over $\bar{k}$. 
From the root system
description of $E$ in \cite{Manin:74}, we can identify $Aut(E)$ with the symmetric
group on 5 elements $S_5$ and the 10 exceptional lines with the set of pairs
$\{\{i,j\} : 1 \le i < j \le 5\}$,
so that the action of $Aut(E)$ corresponds to the natural action
of $S_5$ on the pairs.

Let $Aut(F)$ denote the group of algebraic automorphisms of $F$ over
$\bar{k}$. $Aut(F)$ naturally acts on the exceptional lines preserving
incidence relations, which leads to a homomorphism $\psi: Aut(F) \rightarrow Aut(E)$.

\begin{lem} \label{lem:autF}
$\psi$ is an isomorphism.
\end{lem}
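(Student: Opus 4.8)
The plan is to prove that $\psi: \mathrm{Aut}(F) \to \mathrm{Aut}(E) \cong S_5$ is an isomorphism by establishing injectivity and surjectivity separately, working over $\bar{k}$ where by Theorem~\ref{thm:general} we may assume $F$ is the blowup of $\P^2$ at four points $q_1,\dots,q_4$ in general position. The key structural fact I would exploit is that the Picard group $\mathrm{Pic}(F)$ is a free abelian group of rank $5$ with basis $\{L, E_1, E_2, E_3, E_4\}$ (the pullback of the line class and the four exceptional divisors), and that $\mathrm{Aut}(F)$ acts on $\mathrm{Pic}(F)$ preserving both the intersection form and the canonical class $K = -3L + E_1 + \dots + E_4$. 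The ten exceptional lines are exactly the $(-1)$-classes orthogonal to $K$, so the action on $E$ is faithfully recorded inside the action on $\mathrm{Pic}(F)$.

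For injectivity, I would argue that an automorphism $\sigma$ fixing every exceptional line (hence acting trivially on the ten line classes) must act trivially on all of $\mathrm{Pic}(F)$, since the line classes $L - E_i - E_j$ and $E_i$ span $\mathrm{Pic}(F) \otimes \Q$. An automorphism acting trivially on $\mathrm{Pic}(F)$ preserves the anticanonical linear system and fixes its induced embedding, and one checks it must be the identity: fixing the classes $E_1,\dots,E_4$ means $\sigma$ descends to an automorphism of $\P^2$ fixing each of the four points $q_1,\dots,q_4$, and four points in general position impose enough conditions to force the projective transformation to be the identity. Thus $\ker\psi$ is trivial.

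For surjectivity, the cleanest route is to exhibit automorphisms of $F$ realizing the generators of $S_5$. The natural $S_4$ permuting $q_1,\dots,q_4$ lifts from $\P^2$ (four general points can be sent to any permutation of themselves by a projective transformation, since any four points in general position are projectively equivalent to any other such four, in particular to a permuted copy of themselves), giving an $S_4 \subseteq \mathrm{Aut}(F)$ mapping to the stabilizer of the symbol $5$ in the $S_5$-action on pairs. To obtain the full $S_5$ I would produce one extra automorphism moving the ``fifth'' point, which geometrically corresponds to a standard quadratic (Cremona) transformation of $\P^2$ centered at three of the base points: such a Cremona map based at $q_1,q_2,q_3$ lifts to a genuine automorphism of the blown-up surface $F$ and permutes the exceptional configuration in a way that realizes a transposition outside $S_4$. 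Together with $S_4$ these generate all of $S_5$, so $\psi$ is surjective.

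The main obstacle I expect is surjectivity, specifically verifying that the Cremona transformation lifts to a \emph{regular} automorphism of $F$ rather than merely a birational self-map, and computing precisely which element of $S_5$ it induces on the pairs $\{i,j\}$. The subtlety is that a Cremona transformation based at $q_1,q_2,q_3$ interchanges the exceptional divisors $E_1,E_2,E_3$ with the strict transforms of the lines $L_{jk}$ through pairs of those points, and one must check that for four points in \emph{general} position this birational map is everywhere defined on $F$ (no indeterminacy after blowing up) and induces an even/odd permutation compatible with the claimed generator. An alternative that sidesteps the explicit geometry is to invoke the fact that $\mathrm{Aut}(F)$ surjects onto the subgroup of the Weyl group $W(A_4) \cong S_5$ preserving the canonical class, which is classical for del Pezzo surfaces of degree $5$; combined with the injectivity above and $|S_5| = 120$, a counting argument then closes the isomorphism once one knows $|\mathrm{Aut}(F)| = 120$.
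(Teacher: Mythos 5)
Your proposal is correct, and its injectivity half coincides with the paper's argument: an automorphism acting trivially on the graph fixes each exceptional curve, hence descends to a projective transformation of $\P^2$ fixing four points in general position, which must be the identity (your detour through $\mathrm{Pic}(F)$ is harmless but unnecessary, since the hypothesis already gives that each line is fixed as a curve). For surjectivity, however, you take a genuinely different route. The paper blows down one exceptional line to obtain a degree 6 Del Pezzo $F_1$ and reduces the claim to showing that $\mathrm{Aut}(F)$ is transitive on lines with full vertex stabilizer $D_6$; this is extracted from the structure of $\mathrm{Aut}(F_1)$ as a split extension of $D_6$ by a one-dimensional torus acting transitively off the lines. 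You instead exhibit explicit generators of $S_5$: the $S_4$ of plane collineations permuting $q_1,\dots,q_4$ (which maps onto the stabilizer of the symbol $5$), plus the quadratic Cremona involution based at $q_1,q_2,q_3$, which induces the transposition $(4\,5)$ --- indeed, with the labelling $E_i \leftrightarrow \{i,5\}$ and $L_{ij} \leftrightarrow \{1,2,3,4\}\setminus\{i,j\}$, the swaps $E_1 \leftrightarrow L_{23}$, $E_2 \leftrightarrow L_{13}$, $E_3 \leftrightarrow L_{12}$ are exactly $(4\,5)$. Both proofs ultimately rest on the same two sources of automorphisms (plane collineations and the plane Cremona map), but your decomposition is more direct and elementary, while the paper's reduction buys a cleaner conceptual statement (line-transitivity plus stabilizer) at the cost of invoking the structure theory of the degree 6 surface. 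One detail you flag yourself and should nail down: an arbitrary Cremona map based at $q_1,q_2,q_3$ does not preserve the four-point configuration, so you must normalize coordinates so that $q_4=(1:1:1)$ and take the standard involution $(x:y:z)\mapsto(yz:xz:xy)$, which fixes $q_4$; its lift to the blowup at the three base points is a regular automorphism (classical), it fixes the point lying over $q_4$, and hence lifts further to an automorphism of $F$. With that normalization your argument is complete.
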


\begin{proof}
This is Lemma 3.1.7 of \cite{Skorobogatov:01}. We give a more elementary proof here
that doesn't use the moduli space description of $F$ but instead reduces to the degree 6
Del Pezzo case. 

We identify $F$ with a blow-up of the plane at 4 points.
If $f \in Aut(F)$ fixes each exceptional line, then it comes from an automorphism of the
plane that fixes each of the
4 points. Such an automorphism is trivial if no 3 points lie on a line. Thus
$\psi$ is injective. 

To show surjectivity, it suffices to prove that $Aut(F)$
is transitive on lines and that its stabiliser of any particular line is 
$D_6$. If we blow down any exceptional line $L$, we get a non-degenerate
degree 6 Del Pezzo $F_1$ whose exceptional lines are the images of exceptional lines
of $F$ that don't intersect $L$. Any automorphism of $F_1$ that fixes the image
point $p$ of $L$ will lift to an automorphism of $F$ (that fixes $L$). Considering
the configuration $E$, it is then easy to see that it suffices to prove that the
automorphisms of $F_1$ that preserve a point $p$ not on an exceptional line induce
the full group of graph automorphisms of its exceptional lines. This is true because
the automorphism group of $F_1$ is a split extension of $D_6\cong C_2 \times D_3$ by
a 1-dimensional torus $T$ where $T$ acts transitively on the complement of the exceptional 
lines and is precisely the subgroup fixing all six of these. If $F_1$ is isomorphic to the
plane blown up at three points $P_i$,
the $T.D_3$ part comes from automorphisms of the plane preserving $\{P_i\}$ and
the extra $C_2$ comes from the plane Cremona transform based at the $P_i$.
\end{proof}

We will therefore identify $Aut(F)$ with $S_5$ through its action on the lines. Since the
action of $Aut(F)$ on these is equivariant with respect to the $G_k$ action, if all
of the exceptional lines are defined over $k$ then all elements of $Aut(F)$ are
defined over $k$ also. In particular, this holds for $F_0$ and $Aut(F_0)$ can be
identified with $S_5$ with trivial $G_k$ action.

\begin{lem} \label{lem:galois}
We have the following bijective correspondence
$$  \{\mbox{$k$-isomorphism classes of non-degenerate degree 5 Del Pezzos}\} $$
$$  \Updownarrow $$
$$  H^1(G_k,S_5) = \{\mbox{homomorphisms $G_k \rightarrow S_5$ up to
		$S_5$-conjugacy}\} $$
under which, a $k$-isomorphism class $[F]$ corresponds to the homomorphism
giving the action of $G_k$ on its graph of exceptional lines $E$.

In particular, the $k$-isomorphism class of a 5 Del Pezzo is
determined by the action of $G_k$ on its exceptional lines.
\end{lem}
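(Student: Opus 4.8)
The plan is to invoke the standard classification of $k$-forms of a variety by nonabelian Galois cohomology and then make the cohomology set concrete using the triviality of the Galois action on $Aut(F_0)$. First I would recall the general twisting principle (see \cite[III.1]{Serre:94}): if $X_0$ is a quasi-projective variety over $k$ with automorphism group $A=Aut_{\bar{k}}(X_0)$, then the set of $k$-isomorphism classes of $k$-forms of $X_0$ --- that is, varieties $X/k$ admitting a $\bar{k}$-isomorphism $\theta\colon X\to X_0$ --- is in canonical bijection with $H^1(G_k,A)$, the cocycle attached to $X$ being $\sigma\mapsto \theta^{\sigma}\circ\theta^{-1}$. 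Bijectivity (as opposed to mere injectivity) requires effectivity of Galois descent, which holds here because every degree 5 Del Pezzo is anticanonically embedded in $\P^5$ and hence quasi-projective, so the twisted descent data are always effective; moreover every such twist, preserving the anticanonical class, is again a non-degenerate degree 5 Del Pezzo.

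Next I would specialize to $X_0=F_0$. By the fact recalled in the excerpt, any two degree 5 Del Pezzos are isomorphic over $\bar{k}$, so every non-degenerate degree 5 Del Pezzo is a $k$-form of $F_0$, and conversely every $k$-form is again such a surface. This identifies the left-hand set of the Lemma with $H^1(G_k,Aut(F_0))$. By Lemma~\ref{lem:autF}, together with the observation that all ten exceptional lines of $F_0$ are defined over $k$, the group $Aut(F_0)$ is isomorphic to $S_5$ and carries the \emph{trivial} $G_k$-action.

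With a trivial action the cohomology set simplifies in the familiar way: a $1$-cocycle $c\colon G_k\to S_5$ satisfies $c(\sigma\tau)=c(\sigma)\,c(\tau)$, i.e.\ it is precisely a (continuous) homomorphism, and two cocycles $c,c'$ are cohomologous exactly when $c'(\sigma)=a^{-1}c(\sigma)a$ for a single fixed $a\in S_5$, i.e.\ when they are $S_5$-conjugate. Hence $H^1(G_k,S_5)=Hom(G_k,S_5)/\mathrm{conj}$, which is the displayed right-hand set.

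The remaining, and most delicate, step is to check that the cocycle produced by the general twisting principle, transported through $\psi$, is exactly the homomorphism recording the $G_k$-action on the graph $E$ of exceptional lines of $F$. For this I would fix a $\bar{k}$-isomorphism $\theta\colon F\to F_0$ and note that $\psi$ identifies both $Aut(F)$ and $Aut(F_0)$ with $Aut(E)\cong S_5$; because the exceptional lines are intrinsic and $\psi$ is $G_k$-equivariant (the automorphism action on lines commutes with the Galois action), the automorphism $\theta^{\sigma}\circ\theta^{-1}$ acts on $E$ precisely as the permutation by which $\sigma$ moves the lines of $F$. I expect this equivariance verification --- disentangling how $\theta$, Galois, and $\psi$ interact on the line configuration --- to be the main obstacle, the rest being formal. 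The final assertion of the Lemma, that the $k$-isomorphism class is determined by the $G_k$-action on the exceptional lines, is then an immediate consequence of the bijection.
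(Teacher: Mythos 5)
Your proposal is correct and follows essentially the same route as the paper's proof: both invoke the standard twisting bijection for quasi-projective varieties from \cite[Ch.~3, \S 1]{Serre:94}, reduce $H^1(G_k,S_5)$ to homomorphisms up to conjugacy via the triviality of the $G_k$-action on $Aut(F_0)$ (Lemma~\ref{lem:autF} plus $k$-rationality of the lines of $F_0$), and identify the cocycle $\sigma\mapsto \theta^{\sigma}\circ\theta^{-1}$ with the permutation action on the exceptional graph. You spell out a few points the paper leaves implicit (effectivity of descent, the fact that every twist of $F_0$ is again a degree 5 Del Pezzo, and the conjugation-equals-relabelling remark), but the structure of the argument is identical.
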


\begin{proof}
The bijection (Thm. 3.1.3 of \cite{Skorobogatov:01}) comes from standard twisting theory,
identifying the set $k$-twists of a  quasi-projective variety $X$ with the elements
of $H^1(G_k,Aut_{\bar{k}}(X))$ (see Ch. 3, \S 1, \cite{Serre:94}).

We get the lower equality as $Aut(F_0)$ is equal to $S_5$ with trivial $G_k$-action.

That the cocycle class $[u_F]$ of a twist $F$ of $F_0$ corresponds to the homomorphism
of $G_k$ giving the action on its exceptional graph $E$ is an easy consequence of the
definition of $u_F$ ($u_F(\sigma) = f^{-1}\circ \sigma(f)$ where $f: F_0 \rightarrow F$
is an isomorphism over $\bar{k}$). Note that replacing
$u_F$ by an $S_5$-conjugate homomorphism just corresponds to relabelling the
elements of $E$.
\end{proof}

We now show how to construct a particular Del Pezzo $F$ that has a given
$G_k$ action $f:G_k \rightarrow S_5$ on its set of exceptional lines.
The fixed field $L$ of the
kernel of the $G_k$ action is the splitting field of the 10 lines, which
coincides with the splitting field of the seed $Q$ in case the surface
has been constructed as in section~2.
The possible isomorphism types for subgroups of $S_5$ are those occurring in
Table 1. Straightforward computation shows that:

\begin{enumerate}
\item Each isomorphism type of subgroup different from $C_2$ or $C_2\times C_2$ 
occurs uniquely up to conjugacy in
	$S_5$ (ie, $A \cong B \Rightarrow A$ is conjugate to $B$). The isomorphism type $C_2$
	occurs for two conjugacy classes: $\langle(12)\rangle$ and 
	$\langle(12)(34)\rangle$. The isomorphism type of $C_2\times C_2$ occurs also twice: 
$\langle (12),(34)\rangle$ and $\langle (12)(34),(13)(24)\rangle$. 
\item For $A \le S_5$, $Aut(A)$ is induced by $N_{S_5}(A)/C_{S_5}(A)$, except
	if $A \cong D_4$ or $A \cong D_6$ when $N_{S_5}(A)/C_{S_5}(A)$
	induces the group of inner automorphisms $Inn(A)$ with $[Aut(A):Inn(A)]=2$.
\end{enumerate}

This implies that, up to $S_5$-conjugacy, $f$ is {\it completely determined} by $L$ unless
$G(L/k)$ is isomorphic to $C_2$, $C_2\times C_2$, $D_4$ or $D_6$, when there are two
$S_5$-conjugacy classes of $f \leftrightarrow L$.

Explicitly, for $G(L/k)=D_4=\langle\sigma,\tau | 
\sigma^4=\tau^2=1\quad \tau\sigma\tau^{-1}=\sigma^{-1}\rangle$, the two classes are
$$ [\sigma \mapsto (1234), \tau \mapsto (13)]\qquad\mbox{and}\qquad
   [\sigma \mapsto (1234), \tau \mapsto (12)(34)] $$
and for $G(L/k)=D_6=\langle\sigma,\tau | \sigma^6=\tau^2=1\quad
\tau\sigma\tau^{-1}=\sigma^{-1}\rangle$, the two classes are
$$ [\sigma \mapsto (123)(45), \tau \mapsto (12)]\qquad\mbox{and}\qquad
   [\sigma \mapsto (123)(45), \tau \mapsto (12)(45)] $$

\begin{lem}
Let $Q\in k[x]$ be a quintic normed and squarefree polynomial,
with roots $P_1,\dots,P_5\in\bar{k}$.
Let $F$ be the Del Pezzo surface constructed as in section~2 with seed $Q$.
Then the homomorphism $f: G_k \rightarrow S_5$ corresponding to the 
$k$-isomorphism class of $F$ under the correspondence of Lemma~\ref{lem:galois}
is the permutation representation of $G_k$ on $\{P_1,\ldots,P_5\}$.

Conversely, for any Galois extension $L$ such that $G(L/k)$ is isomorphic to
a subgroup of $S_5$, and an $S_5$-conjugacy class of $f: G(L/k) \hookrightarrow S_5$, 
$f$ comes from an appropriate quintic normed and squarefree polynomial.
\end{lem}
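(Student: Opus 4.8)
The plan is to prove both directions by tracking the Galois action through the geometric construction of Section~2, and then, for the converse, to realize a prescribed permutation action as the action on the roots of a polynomial via ordinary Galois theory. For the forward direction, note that the five points $q_i=\psi(P_i)$ are the images of the roots, so every $\sigma\in G_k$ satisfies $\sigma(q_i)=q_{\sigma(i)}$, where $\sigma(i)$ records the permutation of $\{1,\dots,5\}$ that $G_k$ induces on the roots, i.e. the permutation representation of $G_k$ on $\{P_1,\dots,P_5\}$. By the Theorem of Section~2 the ten exceptional lines are exactly the strict transforms $L_{ij}$ of the lines $q_iq_j$, so they are canonically indexed by the pairs $\{i,j\}$ with $1\le i<j\le 5$, and $\sigma$ carries $L_{ij}$ to $L_{\sigma(i)\sigma(j)}$. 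First I would check that, under this indexing, the incidence graph $E$ is the standard Petersen graph on pairs: an intersection-number computation on the blow-up $\Y$ (where $L_{ij}$ has class $L-E_i-E_j$ and the contracted conic has class $2L-E$, orthogonal to every $L_{ij}$) shows that $L_{ij}$ and $L_{kl}$ meet on $F$ iff $\{i,j\}\cap\{k,l\}=\emptyset$. Hence the identification $\mathrm{Aut}(E)\cong S_5$ of Lemma~\ref{lem:autF} may be taken to be the natural action of $S_5$ on pairs, and the homomorphism $G_k\to\mathrm{Aut}(E)=S_5$ describing the Galois action on the lines is precisely the permutation representation on the roots. Since any two labelings realizing the Petersen graph differ by an element of $\mathrm{Aut}(E)=S_5$, this identification is canonical up to $S_5$-conjugacy, exactly the ambiguity allowed by Lemma~\ref{lem:galois}; that lemma then matches this homomorphism with the class $f$ attached to $[F]$, proving the first assertion.

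For the converse, given $L/k$ and a chosen $S_5$-conjugacy class of an embedding $f:H\hookrightarrow S_5$, with $H:=G(L/k)$, the class of $f$ is the same datum as the isomorphism class of the associated $H$-set $X$ of size $5$, which is recorded by the multiset of conjugacy classes of point-stabilizers $H_1,\dots,H_r$ (one per orbit), satisfying $\sum_s[H:H_s]=5$ and $\bigcap_s\mathrm{core}_H(H_s)=1$ by faithfulness. For each orbit I would set $M_s:=L^{H_s}$ and choose a primitive element $\theta_s$ of $M_s/k$; then the $H$-orbit of $\theta_s$ inside $L$ is an $H$-stable set of $[H:H_s]$ distinct elements on which $H$ acts as on $H/H_s$. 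Assembling these orbits produces an $H$-stable $5$-element subset $S=\{\theta_1,\dots,\theta_5\}\subset L$ on which $H$ acts by $f$, and $Q:=\prod_i(x-\theta_i)$ is then a monic polynomial with coefficients in $k$, its root set being $H$-stable. Because $f$ is faithful, the subgroup of $H$ fixing all of $S$ is trivial, so the splitting field of $Q$ is exactly $L$; and feeding $Q$ into the forward direction returns the action $f$, as required.

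The genuine work lies in the converse, in two places. First, one must ensure $Q$ is squarefree, i.e. that the chosen orbit-elements are pairwise distinct across orbits; this is a genericity condition (for fixed points one simply picks distinct constants in $k$, and for two orbits with the same splitting subfield one perturbs a primitive element, e.g. replacing $\theta_s$ by $\theta_s+c$), easily met when $k$ is infinite, in particular for $k=\Q$, with a routine counting argument covering finite $k$. Second, one must verify that the construction reproduces the \emph{prescribed} conjugacy class and not merely the correct abstract group $H$; this is precisely the content of the earlier remark that only for $H\cong C_2,\,C_2\times C_2,\,D_4,\,D_6$ does $L$ fail to determine $f$ up to conjugacy. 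The key is that the $S_5$-conjugacy class of $f$ is equivalent to the orbit decomposition, that is, to the multiset of stabilizer classes $H_s$, and the two classes in each ambiguous case correspond to genuinely different orbit shapes; for example $t\mapsto(12)$ versus $t\mapsto(12)(34)$ for $C_2$ give stabilizer multisets $\{1,H,H,H\}$ and $\{1,1,H\}$, hence different factorization types of $Q$. Building $Q$ directly from the prescribed stabilizers therefore realizes the chosen class exactly, which is the crux of the argument.
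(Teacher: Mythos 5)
Your proof is correct, and it refines rather than replaces the paper's argument; the one genuine difference of route is in the converse. For the forward direction you do what the paper does --- identify the ten lines with the pairs $\{i,j\}$ of roots and use Galois equivariance of the construction --- except that you verify the incidence structure explicitly by intersection theory on the blow-up (classes $L-E_i-E_j$, which are orthogonal to the contracted conic $2L-E$, so incidence on $F$ is disjointness of pairs), where the paper simply appeals to its earlier identification of $\mathrm{Aut}(E)$ with $S_5$ acting naturally on pairs; your handling of the labelling ambiguity as exactly $S_5$-conjugacy is the right way to invoke Lemma~\ref{lem:galois}. For the converse, the paper treats the transitive case (fixed field of $f^{-1}(S_4)$, conjugates of a primitive element), declares the remaining cases ``similar'', and then constructs roots by hand for each of the four ambiguous types $C_2$, $C_2\times C_2$, $D_4$, $D_6$; you instead use the uniform dictionary between $S_5$-conjugacy classes of embeddings, isomorphism classes of faithful $5$-element $H$-sets, and multisets of stabilizer conjugacy classes, realizing each orbit as the $H$-conjugates of a primitive element of $L^{H_s}$. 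The paper's explicit case constructions are precisely specializations of your recipe (for $D_4$ the two classes arise from the non-conjugate order-$2$ subgroups $\langle\tau\rangle$ and $\langle\sigma\tau\rangle$, exactly the fixed fields $K_1$, $K$ the paper uses), so your version buys uniformity and a built-in proof that the \emph{prescribed conjugacy class}, not merely the abstract group, is realized --- the point the paper has to check case by case; the paper's version buys concrete root choices of the kind used in Table~\ref{tab:polynomials}. One caveat: your dismissal of the squarefreeness issue over finite $k$ as ``routine counting'' is too optimistic --- over $\mathbb{F}_2$, for instance, there are neither three distinct elements of $k$ nor two distinct monic irreducible quadratics, so the classes $\langle(12)\rangle$, $\langle(12)(34)\rangle$ and the trivial action are simply not realizable by a squarefree quintic --- but the paper's own proof has the identical defect, and both arguments are sound for infinite $k$, in particular for $k=\Q$.
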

 
\begin{proof}
The lines on $F$ are in bijective corresponce with the 2-element subsets
of $S := \{P_1,\ldots,P_5\}$; since we identified the action of $Aut(F)$ on $E$
with the natural action of $S_5$ on pairs, the first statement follows.

For a given $L$, we need to find $Q$ such that
the permutation Galois action on its roots is of the form $G_k \twoheadrightarrow
G(L/k) \hookrightarrow S_5$. If $\#f(G(L/k))$ is divisible by 5, then we can
take $K$ as the fixed field of $f^{-1}(S_4) \le G(L/k)$ and take $S$ as the
$G(L/k)$ orbit of a primitive element (over $k$) of $K$. An $S$ for the other
cases can be constructed similarly. In the non-$C_2,D_4,D_6$ cases, we are done.
In these three cases, we also need to show that we can take an $S$ as above that
leads to either of the 2 conjugacy classes of embedding
$G(L/k) \hookrightarrow S_5$. 

In case $G(L/k) = C_2$, pick $P_1,P_2$ conjugate over $L$ and 
$P_3,P_4,P_5 \in k$ for $\langle(12)\rangle$; pick $P_1,P_2$ and $P_3,P_4$
conjugate over $L$ and $P_5 \in k$ for $\langle(12)(34)\rangle$.

In case $G(L/k)=C_2\times C_2$, then we have that $L=k(\alpha )$ where $\alpha$ is a primitive element of $L$ 
and $L=L_1\otimes_kL_2$ where $L_1$ and $L_2$ are two distinct quadratic extensions of $k$ contained in $L$. 
If $P_1$, $P_2$, $P_3$ and $P_4$ are the roots of the minimal polynomials of $\alpha$ and $P_5$ is in $k$ we have 
the embedding of $G(L/k)$ in $S_5$ as the Klein group $V_4=\langle (12)(34),(13)(24)\rangle$. If $P_1$ and $P_2$ are conjugate elements in $L_1$, 
$P_3$ and $P_4$ are conjugate elements in $L_2$ and $P_5$ is in $k$, then the embedding of $G(L/k)$ on $S_5$ is 
the group $\langle (12), (34)\rangle$.

Assume $G(L/k) = D_4 = \langle\sigma,\tau | 
\sigma^4=\tau^2=1\quad \tau\sigma\tau^{-1}=\sigma^{-1}\rangle$. Let 
$K_1 = L^{\langle\tau\rangle}$, $K = L^{\langle\sigma\tau\rangle}$.
$[K_1:k]=4$ and $L$ is the Galois closure of
$K_1$ in $\bar{k}$. Similarly for $K$. We pick $P_5 \in k$ and 
$P_1,\ldots,P_4$ the conjugates of a primitive element of $K/k$ for
one conjugacy class of $f$ and $P_1,\ldots,P_4$ the conjugates of a
primitive element of $K_1/k$ for the other.

The case $G(L/k) = D_6$ is similar.
\end{proof}

In particular, the surfaces constructed cover every $k$-isomorphism class and
are plainly parametrizable over $k$ (they are all constructed by blowing up a Galois-stable set of
5 points on a plane conic and then blowing down the strict transform of the conic).
Thus, we can deduce that all non-degenerate degree
5 Del Pezzos over $k$ are parametrizable over $k$ from this classification without the explicit
construction of $k$-rational points in Section~3. The Lemma also gives an alternative proof
of the completeness result of Section~4.

\begin{table}[h]
\begin{center}
\begin{tabular}{|c|r|c|}
\hline
   Isomorphism type & Polynomial defined by the five points   & Parametric  \\
 of Galois Group, & in $\overline{\Q}$ that determine the  & degree
    \\
 number of orbits & Del Pezzo surface of degree $5$&\  \\

\hline
 $S_5,\ 1$ & $x^5 - 2x^4 - 3x^3 + 6x^2 - 1$ & 5 \\
 $A_5,\ 1$ & $x^5 - 11x^3 - 5x^2 + 18x + 9$ & 5\\
 $S_4,\ 2$ & $(x^4 - 4x^2 - x + 1)x$ & 3\\
 $H_{20},\ 1$ & $x^5 - 9x^3 - 4x^2 + 17x + 12$ & 5 \\
 $A_4,\ 2$ & $(x^4 - x^3 - 7x^2 + 2x + 9)x$ & 3\\
 $D_6,\ 3$  & $(x^3-2)(x^2-5)$ & 4\\
 $D_6,\ 3$ & $(x^3+2)(x^2+x+1)$ & 4\\
 $D_5,\ 2$  & $x^5 - x^4 - 5x^3 + 4x^2 + 3x - 1$ & 5 \\
 $D_4,\ 3$ & $(x^4 - 4x^2 + 5)x$ & 3\\
 $D_4,\ 3$ & $(x^4 - 8x^2 - 4)x$ & 3\\
 $S_3,\ 4$ & $(x^3 - x^2 - 3x + 1)(x+1)x$ & 3 \\
 $C_6,\ 3$ & $(x^3 - x^2 - 2x + 1)(x^2+1)$ & 4\\
 $C_5,\ 2$ & $x^5 - x^4 - 4x^3 + 3x^2 + 3x - 1$ & 5 \\
 $C_4,\ 3$ & $(x^4 - x^3 - 4x^2 + 4x + 1)x$ & 3\\
 $C_2\times C_2,\ 4$ & $(x^4 - 2x^2+9)x$ & 3\\
 $C_2\times C_2,\ 5$ & $(x^2+1)(x^2-2)x$ & 3\\
 $C_3,\ 4$ & $(x^3 - x^2 - 2x + 1)(x+1)x$ & 3\\
 $C_2,\ 6$ & $(x^2+1)(x^2+4)x$ & 3\\
 $C_2,\ 7$ & $(x^2+1)(x+1)(x-1)x$ & 3\\
 $1,\ 10$ &  $(x+2)(x-2)(x+1)(x-1)x$ & 3\\
\hline
\end{tabular}
\end{center}
\caption{Seeds for constructing example Del Pezzo surfaces over $\Q$ of prescribed isomorphism type
	and number of line orbits}
\label{tab:polynomials}
\end{table}

Table \ref{tab:polynomials} below gives examples of seeds for every isomorphism type.
In the table the isomorphism type of the Galois group acting on the Del Pezzo surface and 
the number of orbits of the induced action on the 10 lines appear together with a 
polynomial $f(x)$ of degree $5$ that defines the surface. The groups $C_n$, $D_n$, $S_n$ and $A_n$ denote 
the cyclic, the dihedral, the symmetric and the alternating groups respectively and 
$H_{20}$ denotes the unique subgroup up conjugation of order $20$ of $S_5$. In case there are two conjugacy classes 
of embeddings of the group in $S_5$, a polynomial for each case is given (with the same splitting field $L$). 
The Del Pezzo surface can be obtained 
by mapping $\P^2$ into $\P^5$ by the space of quintics passing with multiplicity two through the points 
$(P_1^2:P_1:1)$, $(P_2^2:P_2:1)$, $(P_3^2:P_3:1)$, $(P_4^2:P_4:1)$, $(P_5^2:P_5:1)$, where $P_1, \ldots ,P_5$ are 
the roots of $f(x)$. In the last colum of the table we include the parametric degree of the Del Pezzo surface. 
This degree is 3 if and only if $F$ is the blowup of $\P^2$ at a Galois-invariant quadruple of points,
if and only if there exists a Galois orbit of 4 pairwise disjoint lines, if and only if $f$ has a linear factor.
The degree is 4 if and only if $F$ is the blowup of a nonsingular quadric 
at a Galois-invariant triple of points but not of $\P^2$, if and only if there exists a Galois orbit of 3 
pairwise disjoint lines but no Galois orbit of 4 pairwise disjoint lines, 
if and only if $f$ has a quadratic but no linear factor.
It is 5 if and only if $F$ is $k$-minimal, if and only if $f$ is irreducible.

\subsection{Testing for $k$-isomorphy}

Given two quintic Del Pezzo surfaces $F_1,F_2$ by two sets of quadric generators of their ideals,
we decide whether they are $k$-isomorphic. 

\paragraph{First step.}
We reduce to deciding whether the two Galois actions
on the roots of two given quintic squarefree polynomials $Q_1,Q_2$ (the seeds) are
$S_5$-conjugate.
The polynomials can be constructed by first calculating a parametrization
by quintics as in Theorem~12, and then construct the polynomial as in Lemma~4.

\paragraph{Second step.}
Given $(Q_1,Q_2)$, we first decide whether the splitting fields
coincide. Theoretically, this can be done by factoring $Q_2$ 
over the splitting field of $Q_1$ and factoring $Q_1$ over the splitting field 
of $Q_2$ (but this is, of course, not the fastest method). 
If the splitting fields
do not coincide, then the actions have different kernels and are not conjugate.
Otherwise, let $L$ be the common splitting field of the $Q_i$.
If the Galois group $G(L/k)$ is not isomorphic to $C_2$, $C_2\times C_2$, $D_4$, or $D_6$,
then the actions are conjugate by the preceding section. Otherwise we get: 

\begin{description}
\item[{\it Case i)}] Assume that $G(L/k)=C_2$. Then the actions are conjugate if and only if
the number of irreducible factors of $Q_1$ and $Q_2$ coincide -- they have either
4 factors of degree 2,1,1,1, or 3 factors of degree 2,2,1.

\item[{\it Case ii)}] Assume that $G(L/k)=C_2\times C_2$. Then the actions are conjugate if and only if
the number of irreducible factors of $Q_1$ and $Q_2$ coincide -- they have either
3 factors of degree 2,2,1, or 2 factors of degree 4,1.

\item[{\it Case iii)}] Assume that $G(L/k)=D_4$. Then both $Q_1$ and $Q_2$ have two irreducible
factors, of degree 1 and 4. Both quartic factors define non-Galois degree 4 extensions of $k$ 
which contain a unique quadratic subextension over $k$. The actions are conjugate if and only if the 
two quadratic extensions coincide. In fact, the Galois theory shows that for two non-Galois, non-conjugate
degree 4 subfields of $L$, $L_1$ and $L_2$, the quadratic subfield of $L_1$ is generated by the
square-root of the discriminant of a defining polynomial for $L_2$ and vice-versa. So we find that
the actions are conjugate if and only if the quotient of the discriminants of the quartic factors
is a square in $k$.

\item[{\it Case iv)}] Assume that $G(L/k)\cong D_6$. Then both $Q_1$ and $Q_2$ have two irreducible
factors, of degree 2 and 3. The actions are conjugate if the two splitting fields
coincide.
\end{description}

\begin{rem} If $k$ is non-perfect, everything above works with
$k^{sep}$ replacing $\bar{k}$ as the exceptional lines for $F$ are
all defined over $k^{sep}$. This follows from the fact that all smooth rational
surfaces are separably split \cite{Coombes:88}.
%
\end{rem}


\begin{thebibliography}{10}

\bibitem{MAGMA}
W.~Bosma, J.~Cannon, and C.~Playoust.
\newblock The {Magma} algebra system {I}: The user language.
\newblock {\em J. Symb. Comp.}, 24:235--265, 1997.

\bibitem{Brodmann_Schenzel:06}
M.~Brodmann and P.~Schenzel.
\newblock On varieties of almost minimal degree in small codimension.
\newblock {\em J. Algebra}, 305:789--801, 2006.

\bibitem{Coombes:88}
Kevin~R. Coombes.
\newblock Every rational surface is separably split.
\newblock {\em Comment. Math. Helv.}, 63(2):305--311, 1988.

\bibitem{Schicho:06d}
W.~A. de~Graaf, M.~Harrison, J.~P{\'i}nikov{\'a}, and J.~Schicho.
\newblock A {Lie} algebra method for the parametrization of {Severi-Brauer}
  surfaces.
\newblock {\em J. Algebra}, 303, 2006.

\bibitem{Schicho:08b}
W.~A. de~Graaf, J.~P\'ilnikov\'a, and J.~Schicho.
\newblock Parametrizing {Del Pezzo} surfaces of degree 8 using {Lie} algebras.
\newblock {\em J. Symb. Comp.}, 44:1--14, 2009.

\bibitem{Schicho:06f}
M.~Harrison and J.~Schicho.
\newblock Rational parametrisation for degree 6 {Del Pezzo} surfaces using
  {Lie} algebras.
\newblock In {\em Proc. ISSAC 2006}, pages 132--137. ACM Press, 2006.

\bibitem{Hassett:09}
B.~Hassett.
\newblock Rational surfaces over non-closed fields.
\newblock In {\em Proc. CMI Summer School 2006}, pages 155--210. AMS, 2009.

\bibitem{Iskovskih:80}
V.A. Iskovskih.
\newblock Minimal models of rational surfaces over arbitrary fields.
\newblock {\em Math. USSR Izv.}, 14:17--39, 1980.

\bibitem{Manin:74}
Y.~Manin.
\newblock {\em Cubic Forms}.
\newblock North-Holland, Amsterdam, 1974.

\bibitem{Nagata:60}
M.~Nagata.
\newblock Rational surfaces {II}.
\newblock {\em Mem. Coll. Sci. Kyoto}, 33:271--293, 1960.

\bibitem{Polo:07}
I.~Polo-Blanco.
\newblock {\em Theory and History of Geometric Models}.
\newblock PhD thesis, University of Groningen, NL, 2007.

\bibitem{Polo_Top:08}
I.~Polo-Blanco and J.~Top.
\newblock A remark on parameterizing nonsingular cubic surfaces.
\newblock {\em Comput. Aided Geom. Design}, 26(8):842--849, 2009.

\bibitem{Schicho:97}
J.~Schicho.
\newblock Rational parametrization of surfaces.
\newblock {\em J. Symb. Comp.}, 26(1):1--30, 1998.

\bibitem{Schicho:00d}
J.~Schicho.
\newblock Proper parametrization of surfaces with a rational pencil.
\newblock In {\em Proc. ISSAC'2000}, pages 292--299. ACM Press, 2000.

\bibitem{Schicho:05}
J.~Schicho.
\newblock Elementary theory of del {P}ezzo surfaces.
\newblock In {\em Computational methods for algebraic spline surfaces}, pages
  77--94. Springer, Berlin, 2005.

\bibitem{Schicho:06b}
J.~Schicho.
\newblock The parametric degree of a rational surface.
\newblock {\em Math. Z.}, 254:185--198, 2006.

\bibitem{Serre:94}
J.-P. Serre.
\newblock {\em Cohomologie galoisienne}, volume~5 of {\em LNM}.
\newblock Springer-Verlag, Berlin, 1994.

\bibitem{Sheperd-Barron:92}
N.~I. Sheperd-Barron.
\newblock The rationality of quintic {Del Pezzo} surfaces -- a short proof.
\newblock {\em Bull. London Math. Soc.}, 24:249--250, 1992.

\bibitem{Skorobogatov:01}
Alexei Skorobogatov.
\newblock {\em Torsors and rational points}, volume 144 of {\em Cambridge
  Tracts in Mathematics}.
\newblock Cambridge University Press, 2001.

\bibitem{Swinnerton:70}
H.~P.~F. Swinnerton-Dyer.
\newblock The birationality of cubic surfaces over a given field.
\newblock {\em Michigan Math. J.}, 17:289--295, 1970.

\bibitem{Swinnerton:72}
H.~P.~F. Swinnerton-Dyer.
\newblock Rational points on del {P}ezzo surfaces of degree {$5$}.
\newblock In {\em Algebraic geometry, Oslo 1970 (Proc. Fifth Nordic Summer
  School in Math.)}, pages 287--290. Wolters-Noordhoff, Groningen, 1972.

\end{thebibliography}
\end{document}